\newtheorem{theorem}{Theorem}[section]
\newtheorem{lemma}[theorem]{Lemma}
\newtheorem{proposition}[theorem]{Proposition}
\newtheorem{corollary}[theorem]{Corollary}
\newtheorem{definition}{Definition}[section]
\newtheorem{example}{Example}[section]
\theoremstyle{remark}
\newtheorem{remark}[theorem]{Remark}
\begin{document}
\title{Motivic cohomology and infinitesimal group schemes}
\author{Eric Primozic}

\maketitle
\setcounter{secnumdepth}{1}

\renewcommand{\thefootnote}{\fnsymbol{footnote}} 
%\footnotetext{2010 \emph{MSC Classification}.  	11E04, 14F42, 19E15, 55P43}     
\renewcommand{\thefootnote}{\arabic{footnote}} 

\begin{abstract}
For $k$ a perfect field of characteristic $p>0$ and $G/k$ a split reductive group with $p$ a non-torsion prime for $G,$ we compute the mod $p$ motivic cohomology of the geometric classifying space $BG_{(r)}$, where $G_{(r)}$ is the $r$th Frobenius kernel of $G.$ Our main tool is a motivic version of the Eilenberg-Moore spectral sequence, due to Krishna. 

\indent For a flat affine group scheme $G/k$ of finite type, we define a cycle class map from the mod $p$ motivic cohomology of the classifying space $BG$ to the mod $p$ \'etale motivic cohomology of the classifying stack $\mathcal{B}G.$ This also gives a cycle class map into the Hodge cohomology of $\mathcal{B}G.$ We study the cycle class map for some examples, including Frobenius kernels.
\end{abstract}
\section*{Introduction}
%\addcontentsline{toc}{section}{Introduction}
\indent Let $k$ be a field of characteristic $p>0$ and let $G$ be a split reductive group over $k.$ The relative Frobenius morphism $F:G \to G^{(1)}$ is a group homomorphism and we can consider the $r$th Frobenius kernel of $G$: $$G_{(r)}=\textup{ker}(F^{r}: G \to G^{(r)})$$ for all $r\geq1.$ The group schemes $G_{(r)}$ have been studied extensively in representation theory. For $G$ simple and simply connected, Friedlander and Parshall proved that \begin{equation} \label{cohfirstfrobkernel}H^{*}(G_{(1)}, k) \cong k[\mathcal{N}]\end{equation} where $\mathcal{N} \subset \mathfrak{g}=\textup{Lie}(G)$ is the nilpotent cone of $\mathfrak{g}$, with some bounds on the prime $p$ that depend on the Coxeter number $h(G)$ of $G$ \cite[Theorem 1.4]{FriPar}. Andersen and Jantzen improved the bound on $p$ to show that the isomorphism \ref{cohfirstfrobkernel} holds if $p>h(G)$ \cite{AndJan}. More generally, for $r \geq 1,$ Suslin, Friedlander, and Bendel proved that there is a homeomorphism of topological spaces \begin{equation} \label{cohallfrobkern} \textup{Spec}(k[V_{r}(G)]) \to \textup{Spec}(H^{*}(G_{(r)}, k))\end{equation} where $V_{r}(G)$ is an affine $k$-scheme parameterizing  morphisms $\mathbb{G}_{a(r)} \to G$ \cite{SusFriBen, SusFriBen2}.

\indent In this paper, we consider the analogous problem of computing the motivic cohomology of the geometric classifying space $BG_{(r)}.$ By a classifying space $BH$ for a flat affine group scheme $H$ of finite type over a field, we mean the construction given by Totaro in \cite{TotChow} and Morel-Voevodsky in \cite{MorVoe} where $BH$ is approximated by smooth fppf quotients $U/H.$ Thus, we are studying characteristic classes for $G_{(r)}$-torsors that are locally trivial in the fppf topology. 

\indent Using that the Frobenius $F^{r}$ is trivial on $G_{(r)},$ we show in Proposition \ref{proptorsion} that all nontrivial cohomology classes in $H^{*}(BG_{(r)}, \mathbb{Z}(*))$ are $p$-power torsion and that torsion varies with weight. Hence, it is natural to consider motivic cohomology with $\mathbb{Z}/p$-coefficients for infinitesimal group schemes. In recent work, the mod $p$ motivic cohomology of $B\mu_{p}$ (where $\mu_{p}$ denotes the group scheme of the $p$th roots of unity over $k$) has been used to determine part of the dual Steenrod algebra and define Steenrod operations on motivic cohomology in equal characteristic \cite{FrankSpi, Pri}. Here, the mod $p$ motivic cohomology of the geometric classifying space $B\mu_{p}$ is nontrivial, while the motivic cohomology of $B\mathbb{Z}/p$ is trivial. (More generally, the motivic cohomology of $BG$ is trivial for $G$ a finite \'etale $p$-group scheme over $k$ \cite[Proposition 3.3]{MorVoe}.) The motivic cohomology of $B\mu_{p}$ is given by $$H^{*}(B\mu_{p}, \mathbb{F}_{p}(*))\cong H^{*}(k, \mathbb{F}_{p}(*))[u,v]/(u^{2})$$ where $v \in H^{2}(B\mu_{p}, \mathbb{F}_{p}(1))$ is the first Chern class under the map $B\mu_{p} \to B\mathbb{G}_{m}$ and $u \in H^{1}(B\mu_{p}, \mathbb{F}_{p}(1))$ is related to $v$ by the Bockstein $\beta(u)=v$ \cite[Theorem 6.10]{Voe}. Hence, we have $$H^{*}(B\mu_{p}, \mathbb{F}_{p}(*)) \cong H^{*}(B\mathbb{G}_{m}, \mathbb{F}_{p}(*)) \otimes _{H^{*}(k,\mathbb{F}_{p}(*))} H^{*}(\mathbb{G}_{m}, \mathbb{F}_{p}(*)) .$$ We generalize this result with the following theorem. Let $r_{1}, \ldots, r_{m}$ denote the \textit{fundamental degrees} of $G$, allowing repetitions so that $CH^{*}(BG) \otimes \mathbb{Q}$ is a polynomial ring on $m$ generators.

\begin{theorem}[Theorem \ref{theoremMotcohBG_r} and Corollary \ref{bocksteiniso}]  Let $G$ be a split reductive group over a perfect field $k$ of characteristic $p>0$ with $p$ a non-torsion prime of $G.$ 
\begin{enumerate}
\item There is an isomorphism $$H^{*}(BG_{(r)}, \mathbb{F}_{p}(*)) \cong CH^{*}(BG)/p  \otimes H^{*}(G, \mathbb{F}_{p}(*))$$
$$\cong \mathbb{F}_{p}[y_{1}, \ldots ,y_{m}] \otimes \bigwedge \nolimits ^{*}(g_{1}, \ldots, g_{m})$$ of graded $CH^{*}(BG)/p$-modules where $y_{i} \in H^{2r_{i}}(BG_{(r)}, \mathbb{F}_{p}(r_{i})) $ and $g_{i} \in H^{2r_{i}-1}(BG_{(r)}, \mathbb{F}_{p}(r_{i}))$ for all $i.$ This is an isomorphism of graded rings if $p>2.$
\item For each $i \leq m,$ $$\delta(g_{i})=p^{rr_{i}-1}y_{i} \in CH^{*}(BG_{(r)})_{(p)}$$ where $\delta$ is the integral Bockstein homomorphism.
\end{enumerate}
\end{theorem}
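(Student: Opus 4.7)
The plan is to use the short exact sequence $1 \to G_{(r)} \to G \xrightarrow{F^r} G \to 1$ and the associated fiber sequence $BG_{(r)} \to BG \xrightarrow{BF^r} BG$, realizing $BG_{(r)}$ as the fiber of $BF^r$ over the basepoint. Krishna's motivic Eilenberg--Moore spectral sequence then supplies
\[
E_2 = \operatorname{Tor}^{*,*,*}_{H^{*}(BG, \mathbb{F}_p(*))}\!\bigl(H^{*}(BG, \mathbb{F}_p(*)), \mathbb{F}_p\bigr) \Longrightarrow H^{*}(BG_{(r)}, \mathbb{F}_p(*)),
\]
in which the first factor is viewed as a module over itself via $(BF^r)^*$.

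Because Frobenius sends a line bundle to its $p$th tensor power, $(BF^r)^*$ acts on an $n$th Chern class by multiplication by $p^{rn}$, which vanishes modulo $p$. Combined with $H^{*}(BG, \mathbb{F}_p(*)) \cong CH^{*}(BG)/p \cong \mathbb{F}_p[y_1, \ldots, y_m]$ (Totaro's theorem, valid since $p$ is non-torsion for $G$), the first Tor factor gets a module structure factoring through the augmentation, and the Koszul resolution yields
\[
E_2 \cong \mathbb{F}_p[y_1, \ldots, y_m] \otimes \bigwedge(g_1, \ldots, g_m),
\]
with $y_i$ in Tor-degree $0$ and $g_i$ in Tor-degree $1$, both of internal motivic bidegree $(2r_i, r_i)$. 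Every basis monomial of $E_2$ lies on the diagonal $\{t = 2w\}$ in the internal $(t,w)$-plane (with $t$ the cohomological degree and $w$ the weight), whereas $d_r$ shifts $t$ by $1-r$ while preserving $w$; for $r \geq 2$ the target is off-diagonal and hence zero, so the spectral sequence degenerates at $E_2$.

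For part (2), the null composition $BG_{(r)} \to BG \xrightarrow{BF^r} BG$ combined with $(BF^r)^* y_i = p^{rr_i} y_i$ forces $p^{rr_i} y_i = 0$ integrally in $CH^{r_i}(BG_{(r)})_{(p)}$, and Proposition~\ref{proptorsion} together with the mod $p$ calculation above shows the order is exactly $p^{rr_i}$. The long exact sequence associated to $0 \to \mathbb{Z} \xrightarrow{p} \mathbb{Z} \to \mathbb{F}_p \to 0$ then produces $g_i \in H^{2r_i - 1}(BG_{(r)}, \mathbb{F}_p(r_i))$ with $\delta(g_i) = p^{rr_i - 1} y_i$, and a bidegree comparison against $E_2$ identifies this $g_i$ with the Tor-degree $1$ generator. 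For $p > 2$, graded commutativity upgrades the module isomorphism to a ring isomorphism, since $g_i$ has odd cohomological degree. The main obstacle I foresee is verifying that Krishna's motivic Eilenberg--Moore spectral sequence applies in the form stated---his hypotheses rely on a smooth fppf quotient presentation compatible with the relevant maps---and confirming that the Bockstein-produced class truly represents the top-filtration generator rather than a lower-filtration class of the same bidegree.
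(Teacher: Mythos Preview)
Your approach to part (1) is genuinely different from the paper's and, once set up correctly, is arguably cleaner. The paper runs Krishna's spectral sequence on the fibration $G \cong G/G_{(r)} \to BG_{(r)} \to BG$, so that the \emph{abutment} is the already-known $H^{*}(G,\mathbb{F}_p(*))$ while the unknown $H^{*}(BG_{(r)},\mathbb{F}_p(*))$ sits inside the $E_2$-page as $H^{*}_{G}(G/G_{(r)})$; it then argues backwards by an induction showing the $E_{\infty}$-page is concentrated in column $0$ and that $H^{*}(BG_{(r)})$ is free over $CH^{*}(BG)/p$. Your spectral sequence, by contrast, converges directly to $H^{*}(BG_{(r)})$. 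Note that this is still an instance of Theorem~\ref{EilenbergMoore}: take the acting reductive group to be $H=G/G_{(r)}\cong G$ and the smooth scheme to be an approximation $X=U/G_{(r)}$; then $H^{*}_{H}(X)=H^{*}(BG,\mathbb{F}_p(*))$ and the structure map $X/\!/H \to BH$ is identified with $BF^{r}$, so the $CH^{*}(BG)/p$-module structure is via $(BF^{r})^{*}=0$ as you claim. Your degeneration argument (everything on $E_2$ has internal degree equal to twice the weight) is correct. What you gain is that you never need to compute $H^{*}(G,\mathbb{F}_p(*))$ or invoke its Hopf-algebra structure; what the paper gains is that Krishna's theorem is invoked in its most literal form without having to reinterpret the base map.

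Part (2), however, has a genuine gap. You correctly obtain the upper bound $p^{rr_i}\tilde y_i=0$ from the null composite and the Frobenius action, but the assertion that ``Proposition~\ref{proptorsion} together with the mod $p$ calculation shows the order is exactly $p^{rr_i}$'' is not justified: Proposition~\ref{proptorsion} only gives upper bounds, and knowing $\tilde y_i\neq 0$ in $CH^{*}(BG_{(r)})/p$ only gives order $\geq p$, not $\geq p^{rr_i}$. The paper devotes the entirety of Proposition~\ref{propositiontorsion} to this lower bound, comparing the \emph{integral} Eilenberg--Moore spectral sequences for $G$ and $G/G_{(n)}$ and tracking a specific class through the filtration to force $p^{a}=p^{nr_i}$. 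Without this, you cannot produce a class $g_i$ with $\delta(g_i)=p^{rr_i-1}\tilde y_i\neq 0$, and your final ``bidegree comparison'' does not pin down the Bockstein image either: in bidegree $(2r_i-1,r_i)$ there can be several monomials $y^{a}g_j$ with $r_j<r_i$, and ruling out that $\delta(g_i)$ falls among the corresponding $p^{rr_j-1}f\tilde y_j$'s is exactly the content of Proposition~\ref{propositiontorsion}(2) and Corollary~\ref{bocksteiniso}.
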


\indent To prove this theorem, we use a motivic analogue of the Eilenberg-Moore spectral sequence, constructed by Krishna in \cite{Kri}. Krishna's spectral sequence requires the torsion index (see section \ref{eilenbergmooresection} for a definition of the torsion index) of the split reductive group being considered to be inverted. We first use the Eilenberg-Moore spectral sequence for the fibration $$G \to \textup{Spec}(k) \to BG$$ to compute the mod $p$ motivic cohomology of $G$ where $p$ is a non-torsion prime for $G.$ We note that Biglari previously computed the geometric motives of split reductive groups with rational coefficients \cite{Big}, and Pushin computed the integral motivic cohomology of $GL_{n}$ over base fields of arbitrary characteristic \cite{Pus}. After computing the mod $p$ motivic cohomology of $G,$ we run the Eilenberg-Moore spectral sequence on the fibration $$G \cong G/G_{(r)} \to BG_{(r)} \to BG$$ to compute the mod $p$ motivic cohomology of $BG_{(r)}.$  

\indent Next, we relate our calculations to \'etale motivic cohomology (also known as Lichtenbaum cohomology). Along with motivic cohomology, \'etale motivic cohomology has been useful in determining cohomological invariants of algebraic groups. (See \cite{BliMer}, for example.) We note that extra care is needed in defining \'etale motivic cohomology and a cycle class map for objects more general than schemes since \'etale motivic cohomology for schemes is not homotopy invariant in positive characteristic. In section \ref{sectioncycleclass}, for $G$ a flat affine group scheme of finite type over $k,$ we define a cycle class map from the mod $p^{r}$ motivic cohomology of $BG$ to the mod $p^{r}$ \'etale motivic cohomology of the classifying stack $\mathcal{B}G:$ $$H^{*}(BG, \mathbb{Z}/p^{r}(*))  \to H^{*}_{\textup{\'et}}(\mathcal{B}G, \mathbb{Z}/p^{r}(*)).$$ We also obtain a cycle map from the mod $p$ motivic cohomology of the classifying space $BG$ to the Hodge cohomology of the classifying stack $\mathcal{B}G:$ $$H^{*}(BG, \mathbb{Z}/p(*)) \otimes k \to H^{*}_{\textup{H}}(\mathcal{B}G/k).$$ We study the cycle class map for some examples. For $G$ a split reductive group over $k$ with $p$ a non-torsion prime for $G,$ we show that the cycle class map from $CH^{*}(BG_{r})/p$  into mod $p$ \'etale motivic cohomology and Hodge cohomology is injective for all Frobenius kernels $G_{(r)}.$ 

%\indent We also obtain a cycle map from the mod $p$ motivic cohomology of the classifying space $BG$ to the Hodge cohomology of the classifying stack $\mathcal{B}G:$ $$H^{*}(BG, \mathbb{Z}/p(*)) \otimes k \to H^{*}_{\textup{H}}(\mathcal{B}G/k)$$ where $H^{i}(BG, \mathbb{Z}/p(j))$ maps into $H^{i-j}(\mathcal{B}G, \Omega^{j}).$ Thanks to the work of Totaro \cite{Tot}, the Hodge cohomology of $\mathcal{B}G$ is more computable than the motivic cohomology of $BG$ for some classes of groups such as split reductive groups.

%\indent Let $G$ be a flat affine group scheme of finite type over $k.$ Totaro showed that the Hodge cohomology of the classifying stack $BG$ is isomorphic to group cohomology: $$H^{i}(BG, \Omega^{j}) \cong H^{i-j}(G, S^{j}(l_{G}))$$ where $l_{G}$ is the co-Lie complex associated to $G$ \cite[Theorem 3.1]{Tot}. (If $G/k$ is smooth, then $l_{G}$ is isomorphic to the co-Lie algebra $\mathfrak{g}^{*}[0]$ in the derived category of $G$-modules.) In particular, for $G$ split reductive, we have $$H^{*}(BG_{(r)}, \mathcal{O})\cong H^{*}(G_{(r)}, k)$$ which is given by \ref{cohfirstfrobkernel} and \ref{cohallfrobkern}. The cycle class map misses these groups since motivic cohomology vanishes in the corresponding bidegrees. For a finite discrete group $G,$ the cycle class map is trivial since $$H^{*}_{\textup{H}}(BG/k) \cong H^{*}(G,k)$$ by \cite[Lemma 10.2]{Tot}. For $G/k$ smooth, the cycle class map is trivial except on mod $p$ Chow groups since $H^{i}(G, S^{j}(\mathfrak{g}^{*}))=0$ for $i<0.$

\section*{Acknowledgments} I thank Burt Totaro for his suggestions.
\section{Motivic cohomology}
Let $F$ be a field and let $X$ be a smooth scheme of finite type over $F.$ Motivic cohomology groups of $X$ are bigraded groups $H^{i}(X, A(j))$ where $A$ is a coefficient ring and $j \geq 0.$ Here, for $x \in H^{i}(X, A(j)),$ we let $\textup{deg}(x) \coloneqq i$ denote the degree of $x$ and $w(x) \coloneqq j$ is the weight. Motivic cohomology groups are isomorphic to Bloch's higher Chow groups \cite[Corollary 2]{Voe2}.

\begin{theorem}  Let $X$ be a smooth scheme of finite type over a field $F.$ Let $i \in \mathbb{Z}$ and let $j \geq 0.$ There is a natural isomorphism $$H^{i}(X,\mathbb{Z}(j)) \cong CH^{j}(X, 2j-i).$$
\end{theorem}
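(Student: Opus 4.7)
The strategy is to exhibit a natural quasi-isomorphism between the motivic complex $\mathbb{Z}(j)$, whose hypercohomology computes the left-hand side, and a suitable shift of Bloch's cycle complex $z^j(X,\bullet)$, whose cohomology gives the right-hand side.

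First I would unfold both constructions. The group $H^i(X,\mathbb{Z}(j))$ is defined as Nisnevich hypercohomology of the Suslin--Voevodsky motivic complex $\mathbb{Z}(j) = C_{*}(\mathbb{Z}_{\mathrm{tr}}(\mathbb{G}_m^{\wedge j}))[-j]$ of sheaves with transfers on smooth schemes over $F$. The higher Chow group $CH^j(X,n)$ is the $n$-th homology of Bloch's cycle complex $z^j(X,\bullet)$, whose $n$-th term is the free abelian group on codimension $j$ subvarieties of $X \times \Delta^n$ meeting every face properly, with differential given by the alternating sum of face pullbacks.

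Next I would interpolate through Suslin's equidimensional cycle complex $z_{\mathrm{equi}}^j(-,\bullet)$, consisting of codimension $j$ cycles that are equidimensional over the simplex factor. On one hand, a codimension $j$ equidimensional cycle on $U \times \Delta^n$ is essentially a finite correspondence, which yields a natural identification of the Suslin--Voevodsky complex $\mathbb{Z}(j)$ with a reindexing of $z_{\mathrm{equi}}^j(-,\bullet)$. On the other hand, there is a tautological inclusion $z_{\mathrm{equi}}^j(-,\bullet) \hookrightarrow z^j(-,\bullet)$, since equidimensional cycles automatically meet all faces properly.

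The main step, and the principal obstacle, is to show that this inclusion of presheaves of complexes is a quasi-isomorphism after Nisnevich sheafification on smooth schemes. This is Voevodsky's moving lemma: on a smooth $X$, every cycle meeting faces properly is chain-homotopic to one that is equidimensional, via an explicit Suslin-style homotopy built from generic translations in affine space. Smoothness of $X$ enters precisely here, since one needs sufficiently many automorphisms to move cycles into equidimensional position. Granting this, a straightforward reindexing of degrees (accounting for the $[-j]$ shift in $\mathbb{Z}(j)$ and the passage between homological and cohomological conventions) yields $H^i(X,\mathbb{Z}(j)) \cong CH^j(X, 2j-i)$, with naturality in $X$ built into the construction.
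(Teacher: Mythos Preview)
The paper does not give its own proof of this statement: it is quoted as a known result and attributed to Voevodsky \cite[Corollary 2]{Voe2}. Your sketch is a reasonable outline of the argument in that reference---identify $\mathbb{Z}(j)$ with a shift of the equidimensional cycle complex, then use Voevodsky's moving lemma to compare the equidimensional complex with Bloch's full cycle complex---so in spirit you are reproducing the cited proof rather than offering an alternative.

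One point worth tightening if you expand this: after establishing the quasi-isomorphism of complexes of (Nisnevich or Zariski) sheaves, you still need to identify the \emph{hypercohomology} of the sheafified Bloch complex with the naive homology of $z^{j}(X,\bullet)$. This requires knowing that Bloch's complex satisfies Zariski descent on smooth schemes, which is the content of Bloch's localization theorem (and its refinement by Levine). Without that step the argument only produces an isomorphism between $H^{i}(X,\mathbb{Z}(j))$ and a hypercohomology group, not directly $CH^{j}(X,2j-i)$.
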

This implies the vanishing $H^{i}(X, \mathbb{Z}(j))=0$ of motivic cohomology if $i>2j.$ Also,  $$H^{i}(X, \mathbb{Z}(j))=0$$ for $i>j+\textup{dim}(X).$  Another type of vanishing that we will need to consider for our calculations is related to the Beilinson-Soul\'e conjecture, which asserts that motivic cohomology vanishes in negative degrees.
\begin{proposition} \label{BeiSou} Let $X$ be smooth and of finite type over a field $k$ of characteristic $p>0.$ Then $H^{i}(X, \mathbb{Z}/p^{r}(j))=0$ for all $i<0,$ $j \geq 0,$ and $r \geq 1.$
\end{proposition}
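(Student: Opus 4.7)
My plan is to reduce the statement to the vanishing of Nisnevich cohomology of a sheaf in negative degrees, via the Geisser--Levine comparison theorem. Specifically, for $X$ smooth over a field of characteristic $p>0$, Geisser and Levine establish a quasi-isomorphism
\[
\mathbb{Z}/p^{r}(j)_{X} \simeq W_{r}\Omega^{j}_{X,\log}[-j]
\]
in the derived category of Nisnevich sheaves on $X$, where $W_{r}\Omega^{j}_{X,\log}$ is the logarithmic part of the de Rham--Witt sheaf. Taking hypercohomology yields
\[
H^{i}(X, \mathbb{Z}/p^{r}(j)) \cong H^{i-j}_{\mathrm{Nis}}(X, W_{r}\Omega^{j}_{X,\log}).
\]

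The right-hand side is the Nisnevich cohomology of a single sheaf, placed in degree zero, and therefore vanishes in all negative cohomological degrees. Under the hypotheses $i < 0$ and $j \geq 0$ we have $i - j \leq i < 0$, so the right-hand side vanishes and the proposition follows.

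There is no serious obstacle here; the argument is essentially a citation of Geisser--Levine together with the triviality that sheaf cohomology vanishes below degree zero. The only mild point of care is to confirm that the Geisser--Levine identification is compatible with the higher Chow groups definition of motivic cohomology used in this paper (as stated in the preceding theorem). One could note in passing that the same argument in fact yields the stronger vanishing $H^{i}(X, \mathbb{Z}/p^{r}(j)) = 0$ for all $i < j$, but only the negative-degree case is needed here.
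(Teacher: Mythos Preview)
Your proof is correct and follows essentially the same approach as the paper: the paper cites Akhtar, who in turn uses the Geisser--Levine identification $\mathbb{Z}/p^{r}(j)\cong W_{r}\Omega^{j}_{\log}[-j]$ to reduce to the vanishing of sheaf cohomology in negative degrees, which is exactly what you do directly. Your observation that this in fact gives the stronger vanishing for $i<j$ is also correct and worth noting.
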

\begin{proof}
Akhtar proves this in \cite[Corollary 2.7]{Akh}, using a result of Geisser-Levine \cite[Theorem 1.1]{GeiLev}.
\end{proof}

\indent Motivic cohomology is contravariant with respect to morphisms between smooth schemes over a field and covariant with respect to proper morphisms. We will also need to use that arbitrary flat morphisms induce pullback maps on motivic cohomology \cite[A.4]{Gei2}. An important example for us will be the absolute Frobenius $F_{X}: X \to X$ (defined as being the $p$th power map on the structure sheaf) for $X/k$ a smooth scheme with $k$ a field of characteristic $p>0.$ Here, $F_{X}$ is not compatible with the map $X \to \textup{Spec}(k)$ unless $k=\mathbb{F}_{p}.$ For a Noetherian ring $R$ of characteristic $p>0$ with $p$ a prime, Kunz proved that the Frobenius $F_{R}: R \to R$ is flat if and only if $R$ is regular \cite{Kun}.

\indent For every $m \in \mathbb{N},$ we let $$\beta:H^{*}(-,\mathbb{Z}/m(*)) \to H^{*+1}(-,\mathbb{Z}/m(*)),$$ $$\delta:H^{*}(-,\mathbb{Z}/m(*)) \to H^{*+1}(-,\mathbb{Z}(*))$$ denote the Bockstein homomorphisms.

% Let $p$ be a prime ($p$ will denote the characteristic of the base field throughout this paper). For every $i \in \mathbb{N},$ we let $$\beta_{i}:H^{*,*}(-,\mathbb{Z}/p^{i}) \to H^{*+1,*}(-,\mathbb{Z}/p^{i}),$$ $$\beta^{\mathbb{Z}}_{i}:H^{*,*}(-,\mathbb{Z}/p^{i}) \to H^{*+1,*}(-,\mathbb{Z})$$ denote the Bockstein homomorphisms.

\section{Classifying spaces}
 We recall some facts about quotients of algebraic groups (found in \cite[I.5]{Jan}, for example). Let $G$ be a flat affine group scheme of finite type over a field $k.$ Let $X/k$ be a quasi-projective scheme such that $G$ acts freely on $X.$ A morphism $f:X \to Y$ of schemes of finite type over $k$ is called a principal $G$-bundle (or $G$-torsor) if $f$ is faithfully flat, $f$ is constant on the $G$-orbits of $X$, and the map $G \times _{k} X \to X \times_{Y} X$ induced by $f$ is an isomorphism. Here, we write $Y=X/G$ and refer to $Y$ as being the quotient of $X$ by the action of $G.$ If $X$ is smooth over $k,$ then $Y$ is also smooth over $k$ \cite[Tag 02KJ]{Sta}. 
 
 \indent If we further assume that $G$ is a finite group scheme (meaning that $\textup{dim}_{k}(k[G])<\infty$), then all quotients by free actions of $G$ exist as schemes \cite[Tag 07S5]{Sta}. Here, for a finite group scheme $G$ acting on $X$ as above, $\mathcal{O}_{X/G}$ can be identified with the sheaf of invariants $\mathcal{O}_{X}^{G}$ \cite[Lemma 2.5 and Remark 2.6]{Bri}. Furthermore, $X/G$ is quasi-projective as well, even when $G$ is infinitesimal \cite[Lemma 2.5]{Bri}.
 
 \indent The classifying space $BG$ was constructed by Totaro \cite{TotChow} and Morel-Voevodsky \cite{MorVoe} (geometric classifying space) by approximating $BG$ by smooth quotients $U/G.$ Let $V$ be a representation of $G$ over $k$ such that $G$ acts freely on $U \coloneqq V \setminus S$ for some closed $G$-invariant $S \subset V$ of codimension $> r$, and such that the quotient $U/G$ exists as a scheme over $k.$ (Such $V, U,$ and $S$ exist for arbitrarily large $r$ by \cite[Remark 1.4]{TotChow}.) Then $H^{i}(BG, \mathbb{Z}(j)) \coloneqq H^{i}(U/G, \mathbb{Z}(j))$ for $j< r.$ The definition of $BG$ is independent of choices \cite[Theorem 1.1]{TotChow}.  More generally, for an action of $G$ on a smooth scheme $X/k,$ Edidin-Graham defined equivariant motivic cohomology $H^{*}_{G}(X,\mathbb{Z}(*))$ as the motivic cohomology of $X//G=(X \times EG)/G,$ which is defined by taking approximations $(X \times U)/G$ for $U=V\setminus S, V$ as above with the codimension of $S$ sufficiently large \cite{EdiGra}.
 
 \begin{example} Let $G \to H$ be an inclusion of algebraic groups over $k.$ Then $$BG \cong (H/G)//H=(EH \times H/G)/H.$$ We have a fibration $$H/G \to BG \to BH.$$
 \end{example}
 
\indent Now we recall the definition of Frobenius kernels of affine group schemes. See \cite[I.9]{Jan} for a more detailed discussion. Assume that $k$ is a field of characteristic $p>0.$ For a scheme $S$ over $k,$ we let $F_{S}:S \to S$ denote the absolute Frobenius morphism, and define $S^{(1)}$ by the following fiber product.
\[
\begin{tikzcd}
S^{(1)} \arrow[r, ""] \arrow[d,""] & S \arrow[d, ""] \\
\textup{Spec}(k) \arrow[r, "F_{k}"] & \textup{Spec}(k)
\end{tikzcd}
\]
The Frobenius $F_{S}$ on $S$ then induces the relative Frobenius $F_{S/k}: S \to S^{(1)}.$ Specializing to the case where $S=G$ with $G/k$ an affine group scheme, $G^{(1)}$ inherits a group structure from $G$ such that $F_{G/k}:G \to G^{(1)}$ is a homomorphism of group schemes. Iterating, we get a homomorphism of group schemes $F^{r}_{G/k}:G \to G^{(r)}$ for all $r>0.$ For $r>0,$ the $r$th Frobenius kernel of $G$ is defined by $G_{(r)} \coloneqq \textup{ker}(F^{r}_{G/k}).$ For $G$ split reductive, $G$ is defined over $\mathbb{F}_{p}$ and so we have $G^{(r)} \cong G$ for all $r \geq 0.$ 

\indent The following proposition shows that all nontrivial elements in $H^{*}(BG_{(r)}, \mathbb{Z}(*))$ are $p$-power torsion. The second part of this proposition (suggested to us by B. Totaro) gives a better estimate on the torsion of cohomology in large weights. Eventually, we will consider the action of Bockstein homomorphisms on motivic cohomology with $\mathbb{Z}/p^{N}$-coefficients (so precise information on torsion will be needed).

\begin{proposition} \label{proptorsion} Let $G$ be a split reductive group over a field $k$ of characteristic $p>0$ with identity element $\textup{Spec}(k) \to G,$ and let $r>0.$ Let $\pi: B\textup{Spec}(k) \to BG$ denote the corresponding map of classifying spaces. Let $x \in H^{i}(BG_{(r)}, \mathbb{Z}(j))$ for some $i,j \geq 0$ and assume that $\pi^{*}x=0 \in H^{i}(k, \mathbb{Z}(j)).$
\begin{enumerate}

\item We have $p^{rj}x=0$ in $H^{i}(BG_{(r)}, \mathbb{Z}(j)).$
\item We have $p^{r\textup{dim}(G)}x=0$ in $H^{i}(BG_{(r)}, \mathbb{Z}(j)).$
\end{enumerate}
\end{proposition}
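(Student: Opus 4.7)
The strategy is to work with a smooth approximation $U/G_{(r)}$ of $BG_{(r)}$ in the sense of Totaro, with $U = V \setminus S$, $V$ a representation of $G_{(r)}$ over $k$, and $S$ a $G_{(r)}$-invariant closed subset of codimension larger than $j$. Enlarging $V$ if necessary, we may arrange that $U$ carries a $k$-rational point $u$. Then $\pi$ is realized as the composition $\textup{Spec}(k) \xrightarrow{u} U \xrightarrow{q} U/G_{(r)}$ (different choices of $u$ produce the same trivial $G_{(r)}$-torsor over $\textup{Spec}(k)$ and hence the same classifying map to $BG_{(r)}$). Since $U \subset V$ has complement of codimension exceeding $j$, $\mathbb{A}^1$-homotopy invariance implies that the structure map pullback $H^i(k, \mathbb{Z}(j)) \to H^i(U, \mathbb{Z}(j))$ is an isomorphism; as $u$ is a section of it, $u^*$ is also an isomorphism. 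From $\pi^* = u^* q^*$ it follows that the hypothesis $\pi^* x = 0$ is equivalent to $q^* x = 0$.

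For part (1), the key input is that on any smooth $k$-scheme $Y$ the absolute Frobenius $F_Y$ is flat (Kunz) and pulls a class in $H^i(Y, \mathbb{Z}(j))$ back to $p^j$ times itself; iterating gives $(F_Y^r)^* = p^{rj}$. Because the $r$-fold Frobenius restricted to $G_{(r)}$ is the trivial group homomorphism---this is the defining property of $G_{(r)}$---the absolute Frobenius $F^r_U \colon U \to U$ is $G_{(r)}$-equivariant when the target is equipped with the \emph{trivial} $G_{(r)}$-action. This is checked at the level of comodule maps using the identity $f^{p^r} = \epsilon(f)^{p^r}$ for $f \in \mathcal{O}_{G_{(r)}}$, with $\epsilon$ the counit. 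Consequently $F^r_U$ descends to a morphism $\bar{F} \colon U/G_{(r)} \to U$, and naturality of Frobenius together with faithful flatness of $q$ forces $q \circ \bar{F} = F^r_{U/G_{(r)}}$. Therefore $p^{rj} x = (F^r_{U/G_{(r)}})^* x = \bar{F}^*(q^* x) = 0$.

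For part (2), I use a transfer argument. As a $G_{(r)}$-torsor, the quotient $q \colon U \to U/G_{(r)}$ is finite and flat of constant degree equal to the rank of $G_{(r)}$ as a finite $k$-group scheme, namely $\dim_k k[G_{(r)}] = p^{r\textup{dim}(G)}$. The finite-flat pushforward $q_* \colon H^*(U, \mathbb{Z}(j)) \to H^*(U/G_{(r)}, \mathbb{Z}(j))$ exists (finite maps between smooth schemes are proper of relative dimension zero) and satisfies the projection formula $q_* \circ q^* = p^{r\textup{dim}(G)} \cdot \textup{id}$. Applying this to $x$ and using $q^* x = 0$ gives $p^{r\textup{dim}(G)} x = 0$.

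The principal technical step is the Frobenius descent in part (1), which reduces to the defining property of $G_{(r)}$ and to the standard fact that absolute Frobenius multiplies weight-$j$ motivic classes by $p^j$. The remaining ingredients---$\mathbb{A}^1$-invariance for the Totaro approximation and the finite-flat transfer for the torsor $q$---are standard, so no serious obstacle is anticipated.
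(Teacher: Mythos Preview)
Your argument is essentially the same as the paper's: both parts use an approximation $U\to U/G_{(r)}$, part (1) factors the absolute Frobenius $F^{r}_{U/G_{(r)}}$ through the quotient map via a lift $\bar F\colon U/G_{(r)}\to U$ (the paper's $\psi$), and part (2) uses the degree-$p^{r\dim G}$ transfer. The one point you should make explicit is that $\bar F$ is flat---since $\bar F$ is not a $k$-morphism when $k\neq\mathbb{F}_p$, you need flat pullback to form $\bar F^{*}$, and flatness follows because $q\circ\bar F=F^{r}_{U/G_{(r)}}$ is flat (Kunz) and $q$ is faithfully flat; the paper records this step.
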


\begin{proof}
First, we describe the action of the Fronenius morphism on motivic cohomology. Let $X/k$ be a smooth variety. Consider the flat pullback map $F_{X}^{*}$ defined on the motivic cohomology of $X.$ We wish to prove that $F_{X}^{*}(x)=p^{j}x$ for $x \in H^{i}(X, \mathbb{Z}(j)).$  If $k=\mathbb{F}_{p},$ this is a known result \cite[Theorem 4.6]{Gei}. The case for a general field $k$ of characteristic $p$ then follows since $X$ is essentially smooth over $\mathbb{F}_{p},$ meaning that $X$ is a cofiltered limit of smooth schemes over $\mathbb{F}_{p}.$  (See \cite[\S 2]{HKO} for results on essentially smooth schemes and motivic cohomology.)

\indent Let $V$ be a representation of $G_{(r)}$ such that $G_{(r)}$ acts freely on some open $U=V\setminus S$ where $S \subset V$ is a closed $G_{(r)}$-subset of codimension $>j$ such that $U/G_{(r)}$ exists as a scheme. Let $\pi:U \to U/G_{(r)}$ denote the quotient map. As $F^{r}_{G_{(r)}}:G_{(r)} \to G_{(r)}$ is trivial, there exists a map $\psi:U/G_{(r)} \to U$ such that the following triangle commutes.
\[
\begin{tikzcd}
U/G_{(r)} \arrow[r, "\psi"] \arrow[dr, "F^{r}_{U/G_{(r)}}"] & U \arrow[d, "\pi"] \\
& U/G_{(r)}
\end{tikzcd}
\]
Note that $\psi$ is a flat morphism since $\pi$ is faithfully flat and $F^{r}_{U/G_{(r)}}$ is flat \cite[Tag 06NB]{Sta}.

\indent From the localization sequence for motivic cohomology, we have $$H^{i}(U, \mathbb{Z}(j)) \cong H^{i}(V, \mathbb{Z}(j)) \cong  H^{i}(k, \mathbb{Z}(j)).$$ Then $$0=(F^{r}_{U/G_{(r)}})^{*}(x)=\psi^{*}(\pi^{*}(x)) =p^{rj}x.$$ This proves the first part.

\indent For the second part, we use that $\pi$ is a finite morphism of degree $\textup{dim}_{k}(k[G_{(r)}])=p^{r\textup{dim}(G)}$ \cite[I.9.6(2)]{Jan}. Then $$0=\pi_{*}\pi^{*}x=p^{r\textup{dim}(G)}x.$$

\end{proof}
\begin{remark} \label{remarkField} For $k$ a field of characteristic $p>0,$ Geisser-Levine proved that $H^{i}(k, \mathbb{Z}(j))$ is uniquely $p$-divisible for $i \neq j$ \cite[Theorem 1.1]{GeiLev}. Hence, if $k$ is also perfect, $H^{i}(k, \mathbb{Z}(j))$ is $p$-divisible for $j \neq 0.$ 
\end{remark}
Let $k$ be a perfect field of characteristic $p>0.$ For $n \in \mathbb{N},$ we let $\mu_{n}$ denote the group scheme of $n$th roots of unity in $k.$
\begin{proposition} \label{CohBmu}
Let $n \in \mathbb{N}.$ There is an isomorphism of rings $$H^{*}(B\mu_{p^{n}}, \mathbb{Z}/p(*)) \cong \mathbb{F}_{p}[u,v]/(u^{2})$$ where $v \in H^{2}(B\mu_{p^{n}}, \mathbb{Z}/p(1))$ is induced by the inclusion $\mu_{p^{n}} \to \mathbb{G}_{m}$ and $u \in H^{1}(B\mu_{p^{n}}, \mathbb{Z}/p(1)).$
\end{proposition}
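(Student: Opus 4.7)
The plan is to adapt the geometric argument behind Voevodsky's computation for $B\mu_{p}$ by realizing $B\mu_{p^{n}}$ as the limit of complements of zero sections in explicit line bundles on projective spaces. I take $V = \mathbb{A}^{N}$ with $\mu_{p^{n}}$ acting by scalar multiplication through the inclusion $\mu_{p^{n}} \hookrightarrow \mathbb{G}_{m}$, so $U = V \setminus \{0\}$ is $\mu_{p^{n}}$-free and $U/\mu_{p^{n}}$ computes $H^{i}(B\mu_{p^{n}}, \mathbb{Z}/p(j))$ for $j < N$. Since the $\mu_{p^{n}}$-action extends to a free $\mathbb{G}_{m}$-action with quotient $U/\mathbb{G}_{m} = \mathbb{P}^{N-1}$, the induced map $U/\mu_{p^{n}} \to \mathbb{P}^{N-1}$ is a Zariski-locally trivial $\mathbb{G}_{m}$-torsor: it is the complement of the zero section in the line bundle $L = \mathcal{O}(-p^{n})$, obtained by pushing the tautological $\mathbb{G}_{m}$-torsor $U \to \mathbb{P}^{N-1}$ along the isomorphism $\mathbb{G}_{m}/\mu_{p^{n}} \xrightarrow{\sim} \mathbb{G}_{m}$, $[t] \mapsto t^{p^{n}}$. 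In particular $c_{1}(L) = -p^{n}h \in H^{2}(\mathbb{P}^{N-1}, \mathbb{Z}(1))$, where $h$ is the hyperplane class.

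Next I apply the Gysin long exact sequence for the pair $(L, L \setminus 0_{\mathbb{P}^{N-1}})$ with $\mathbb{Z}/p$-coefficients, using homotopy invariance $H^{*}(L, \mathbb{Z}/p(*)) \cong H^{*}(\mathbb{P}^{N-1}, \mathbb{Z}/p(*))$:
\[
\cdots \to H^{i-2}(\mathbb{P}^{N-1}, \mathbb{Z}/p(j-1)) \xrightarrow{\cup c_{1}(L)} H^{i}(\mathbb{P}^{N-1}, \mathbb{Z}/p(j)) \to H^{i}(U/\mu_{p^{n}}, \mathbb{Z}/p(j)) \to H^{i-1}(\mathbb{P}^{N-1}, \mathbb{Z}/p(j-1)) \to \cdots.
\]
Because $c_{1}(L) = -p^{n}h \equiv 0 \pmod{p}$, every cup-product map vanishes and the sequence breaks into short exact sequences. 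The projective bundle formula gives $H^{*}(\mathbb{P}^{N-1}, \mathbb{Z}/p(*)) = H^{*}(k, \mathbb{Z}/p(*))[h]/(h^{N})$, and for perfect $k$ of characteristic $p$ the Geisser--Levine theorem gives $H^{i}(k, \mathbb{Z}/p(j)) \cong K_{j}^{M}(k)/p$ concentrated at $i = j$; together with $k^{\times} = (k^{\times})^{p}$ this forces $H^{*}(k, \mathbb{Z}/p(*)) = \mathbb{F}_{p}$ in bidegree $(0,0)$. Letting $N \to \infty$, the additive structure becomes
\[
H^{*}(B\mu_{p^{n}}, \mathbb{Z}/p(*)) \cong \mathbb{F}_{p}[v] \oplus u \cdot \mathbb{F}_{p}[v],
\]
where $v \in H^{2}(B\mu_{p^{n}}, \mathbb{Z}/p(1))$ is the pullback of the hyperplane class along $B\mu_{p^{n}} \to B\mathbb{G}_{m}$ and $u \in H^{1}(B\mu_{p^{n}}, \mathbb{Z}/p(1))$ is any lift of $1 \in H^{0}(\mathbb{P}^{\infty}, \mathbb{Z}/p(0))$ under the connecting map.

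For the ring structure it remains to verify $u^{2} = 0$, and this follows from a bidegree count: $u^{2} \in H^{2}(B\mu_{p^{n}}, \mathbb{Z}/p(2))$, which by the short exact sequence is identified with $H^{2}(\mathbb{P}^{\infty}, \mathbb{Z}/p(2)) \oplus H^{1}(\mathbb{P}^{\infty}, \mathbb{Z}/p(1))$; both summands vanish for perfect $k$ since every contribution would require a nonzero $K_{j}^{M}(k)/p$ for some $j \geq 1$. Thus $u^{2} = 0$ is automatic and we obtain the ring isomorphism $\mathbb{F}_{p}[u, v]/(u^{2})$. The main technical point requiring care is the identification $L = \mathcal{O}(-p^{n})$, i.e.\ that the first Chern class of the $\mathbb{G}_{m}$-torsor $U/\mu_{p^{n}} \to \mathbb{P}^{N-1}$ is divisible by $p^{n}$; this is a standard descent computation tracking how the $\mathbb{G}_{m}$-torsor transforms under the $p^{n}$-power quotient, but it is the one geometric input beyond routine Gysin-sequence formalities.
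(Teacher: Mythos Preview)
Your proof is correct and follows essentially the same approach as the paper: both identify $B\mu_{p^{n}}$ with the complement of the zero section in $\mathcal{O}(\pm p^{n})$ over $\mathbb{P}^{\infty}$ and exploit the Gysin sequence (you in cohomology of finite approximations, the paper via the Gysin triangle in $DM(k,\mathbb{Z}/p)$) together with the vanishing of $p^{n}c_{1}$ modulo $p$. Your bidegree argument for $u^{2}=0$ is a clean alternative to the paper's brief Bockstein remark and makes the product structure completely explicit.
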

\begin{proof}
This result is well-known. Let $DM(k, \mathbb{Z}/p)$ denote Voevodsky's triangulated category of motives with coefficients in $\mathbb{Z}/p$ \cite{MVW}. Throughout the proof, we consider motives with $\mathbb{Z}/p$-coefficients. The motive $M(B\mu_{p^{n}})$ of $B\mu_{p^{n}}$ is isomorphic to $\mathcal{O}_{\mathbb{P}^{\infty}}(p^{n})- z_{\mathbb{P}^{\infty}}$ where $z_{\mathbb{P}^{\infty}} \subset \mathcal{O}_{\mathbb{P}^{\infty}}(p^{n})$ is the zero section of the line bundle $\mathcal{O}_{\mathbb{P}^{\infty}}(p^{n}) \to \mathbb{P}^{\infty}.$ We have $$M(B\mathbb{G}_{m})=\mathbb{P}^{\infty} \cong \bigoplus^{\infty}_{i=0} \mathbb{Z}/p(i)[2i]$$ in $DM(k, \mathbb{Z}/p).$ The Gysin triangle 
\[
\begin{tikzcd}
 M(B\mu_{p^{n}}) \arrow[r, ""] &  M(B\mathbb{G}_{m}) \arrow[r, "p^{n}c_{1}=0"] &M(B\mathbb{G}_{m})(1)[2] \arrow[r, ""]& M(B\mu_{p^{n}})[1]
 \end{tikzcd}
 \]
 splits, giving $$M(B\mu_{p^{n}}) \cong M(B\mathbb{G}_{m}) \oplus M(B\mathbb{G}_{m})(1)[1]$$ in $DM(k, \mathbb{Z}/p).$ To determine the product structure, we use that $H^{2}(B\mu_{p^{n}}, \mathbb{Z}(1)) \cong \mathbb{Z}/p^{n}$ so that the Bockstein $\delta:H^{1}(B\mu_{p^{n}}, \mathbb{Z}/p(1)) \to H^{2}(B\mu_{p^{n}},\mathbb{Z}(1))$ maps a generator $u \in H^{1}(B\mu_{p^{n}}, \mathbb{Z}/p(1))$ to a nonzero element of $H^{2}(B\mu_{p^{n}},\mathbb{Z}(1)).$
 \end{proof}
\section{Eilenberg-Moore spectral sequence} \label{eilenbergmooresection}

In this section, we recall the motivic Eilenberg-Moore spectral sequence of Krishna. We follow \cite[Chapter 16]{Tot2} where a construction of the Eilenberg-Moore spectral sequence is given (with cohomological grading) and where the notation is similar to ours. Let $G$ be a split reductive group over a field $k$ with Borel subgroup $B$, split torus $T\subset B,$ and Weyl group $W.$ Set $N=\textup{dim}(G/B)$ and $X^{*}(T)=\textup{Hom}(T, \mathbb{G}_{m}).$ There is a natural map $X^{*}(T) \to CH^{1}(G/B).$  The torsion index $t_{G}\in \mathbb{N}$ of $G$ (defined by Grothendieck in \cite{Gro}) is the order of the cokernel of the homomorphism $S^{N}(X^{*}(T)) \to CH^{N}(G/B) \cong \mathbb{Z}.$ Inverting the torsion index, the map $BT \to BG$ gives an isomorphism \begin{equation} \label{ChowTorsion}CH^{*}(BG)[t_{G}^{-1}] \cong CH^{*}(BT)^{W}[t_{G}^{-1}]\cong \mathbb{Z}[t_{G}^{-1}][y_{1}, \ldots, y_{m}]\end{equation} where $\textup{deg}(y_{1}), \ldots \textup{deg}(y_{m})$ are the fundamental degrees of $G$ \cite[Theorem 1.3]{Tot1}, \cite[Th\'eor\`eme]{Dem}. A prime $l$ is a said to be a torsion prime of $G$ if $l \mid t_{G}$ and is called a non-torsion prime otherwise.

\indent Recall that an algebraic group $G$ over a field $k$ is called special if all $G$-torsors are Zariski locally trivial. In this case, $t_{G}=1.$ For example, the groups $GL(n), SL(n),$ and $Sp(2n)$ are all special and hence have torsion index equal to $1.$ More generally, for a simply connected simple group $G,$ $p$ is a torsion prime for $G$ if $p=2$ and $G$ is not of type $A_{n}$  or $C_{n};$ if $p=3$ and $G$ is not of type $A_{n}, B_{n}, C_{n}, D_{n},$ or $G_{2};$ and if $p=5$  and $G$ is of type $E_{8}$ \cite[Corollary 1.13]{Ste}.

\indent Let $R$ be a bigraded ring and let $M,N$ be bigraded $R$-modules. For $i \geq0$ and $q,j \in \mathbb{Z},$ let $\textup{Tor}_{i,q,j}^{R}(M,N)$ denote the $(q,j)$th bigraded piece of $\textup{Tor}_{i}^{R}(M,N).$ 
\begin{definition} For independent variables $x_{1} \ldots,x_{m}$ and a ring $R,$ we let $\bigwedge_{R}^{*}(x_{1}, \ldots, x_{m})$ denote the free $R$-module with basis given by $dx_{i_{1}} \wedge \cdots \wedge dx_{i_{j}}$ where $1 \leq i_{1}< i_{2} <\cdots < i_{j} \leq m.$ 
\end{definition}
\indent The notation  $\Delta_{R}(x_{1} ,\ldots, x_{m})$ is also used in the literature to denote the same thing, but we will be considering product structures (given by wedge products) so the former notation seems better. Also, we will suppress the ring $R$ from our notation. 
\begin{theorem} \label{EilenbergMoore} Let $G$ be a split reductive group over a field $k,$ acting on a smooth scheme $X/k.$ Let $T \subset G$ be a split torus with $CH^{*}(BT) \cong \mathbb{Z}[t_{1}, \ldots, t_{m}].$ Let $j \geq 0.$
\begin{enumerate} 
\item There exists a left half-plane convergent spectral sequence $$E_{1}^{p,q}=H^{2p+q}_{T}(X, \mathbb{Z}(j+p)) ^{\oplus \binom{m}{-p}} \Rightarrow H^{p+q}(X, \mathbb{Z}(j)).$$
\item There exists a left half-plane convergent spectral sequence \[ \label{secondEMSS}E_{2}^{p,q}=\textup{Tor}^{CH^{*}(BG)}_{-p,q,j}(\mathbb{Z}, H^{*}_{G}(X, \mathbb{Z}(*)))[t_{G}^{-1}] \Rightarrow H^{p+q}(X, \mathbb{Z}(j))[t_{G}^{-1}].\]
\item Let $l$ be a non-torsion prime for $G$ and let $n \in \mathbb{N}.$ There exists a second-quadrant convergent spectral sequence $$E_{2}^{p,q}=\textup{Tor}^{CH^{*}(BG)/l^{n}}_{-p,q,j}(\mathbb{Z}/l^{n}\mathbb{Z}, H^{*}_{G}(X, \mathbb{Z}/l^{n}\mathbb{Z}(*))) \Rightarrow H^{p+q}(X, \mathbb{Z}/l^{n}\mathbb{Z}(j)).$$
\end{enumerate}
\end{theorem}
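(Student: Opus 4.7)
The plan is to construct the spectral sequences following Krishna's approach \cite{Kri}, as exposited in \cite[Chapter 16]{Tot2}. For part (1), the key observation is that since $CH^*(BT) = \mathbb{Z}[t_1, \ldots, t_m]$ is polynomial, the Koszul complex provides an explicit free resolution of $\mathbb{Z}$ over $CH^*(BT)$: its $(-p)$-th term is free of rank $\binom{m}{-p}$, with generators placed in bidegree $(-2p, -p)$ to match the bidegree $(2, 1)$ of each $t_i$. The spectral sequence is then built as a motivic bar / Koszul construction for the fibration $X \to X//T \to BT$, so that tensoring the Koszul complex with $H^*_T(X, \mathbb{Z}(*))$ over $CH^*(BT)$ and reading off the $(q, j)$-bigraded piece in homological column $p$ yields $E_1^{p, q} = H^{2p+q}_T(X, \mathbb{Z}(j+p))^{\oplus \binom{m}{-p}}$, with $d_1$ equal to multiplication by the $t_i$. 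The $E_2$-page is then $\textup{Tor}^{CH^*(BT)}(\mathbb{Z}, H^*_T(X, \mathbb{Z}(*)))$, and convergence to $H^{p+q}(X, \mathbb{Z}(j))$ is strong because the Koszul filtration has finite length $m$.

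For part (2), after inverting the torsion index $t_G$, the pullback along $BT \to BG$ becomes an isomorphism $CH^*(BG)[t_G^{-1}] \cong CH^*(BT)^W[t_G^{-1}]$ (cf. \eqref{ChowTorsion}), and more generally $H^*_T(X, \mathbb{Z}(*))[t_G^{-1}]$ becomes a free $H^*_G(X, \mathbb{Z}(*))[t_G^{-1}]$-module with basis indexed by $W$. A standard change-of-rings identity for Tor then converts the localized $E_2$-page from part (1) into $\textup{Tor}^{CH^*(BG)}_{-p, q, j}(\mathbb{Z}, H^*_G(X, \mathbb{Z}(*)))[t_G^{-1}]$, and convergence is inherited. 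Part (3) follows by reducing the spectral sequence of part (2) modulo $l^n$: since $l$ is a non-torsion prime, $t_G$ is already a unit in $\mathbb{Z}/l^n$, so the inversion of $t_G$ becomes invisible. The refinement from the left half-plane to the second quadrant then follows from Proposition \ref{BeiSou}, which gives $H^i(Y, \mathbb{Z}/l^n(j)) = 0$ for $i < 0$ and hence forces the $E_1$-terms to vanish once $2p + q < 0$.

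The principal technical obstacle lies in part (1): constructing the motivic bar-type spectral sequence rigorously and identifying its $E_1$-page with the Koszul complex, including the correct weight shifts. The difficulty relative to the classical topological Eilenberg-Moore spectral sequence is the presence of an extra weight grading on motivic cohomology, which must be tracked through the geometric approximations of $BT$ and the relevant Gysin maps; one must in particular verify that $d_1$ is literally cup product with the tautological classes $t_i \in CH^1(BT)$ in the correct bidegree, a computation carried out by Krishna in \cite{Kri}. Once this is in place, parts (2) and (3) are essentially formal consequences, since the splitting principle and the finiteness of the Koszul resolution take care of both the algebraic repackaging and the convergence.
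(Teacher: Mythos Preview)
Your proposal is correct and follows essentially the same route as the paper: part (1) is the Koszul/bar spectral sequence of \cite[Chapter 16]{Tot2} with $E_2$-page $\textup{Tor}^{CH^*(BT)}(\mathbb{Z}, H^*_T(X,\mathbb{Z}(*)))$, and parts (2) and (3) are deduced from it via the isomorphism $H^*_T(X,\mathbb{Z}(*))[t_G^{-1}] \cong H^*_G(X,\mathbb{Z}(*)) \otimes_{CH^*(BG)} CH^*(BT)[t_G^{-1}]$ together with flat base change for Tor (your ``change-of-rings'' identity), with the second-quadrant claim in (3) coming from Proposition~\ref{BeiSou}. The only cosmetic difference is that the paper phrases the passage from $T$ to $G$ as flat base change rather than as freeness of rank $|W|$, and obtains (3) directly from (1) with $\mathbb{Z}/l^n$-coefficients rather than by reducing (2) mod $l^n$; both formulations yield the same Tor identification.
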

\begin{proof} The first spectral sequence is \cite[Corollary 16.3]{Tot2} and the $E_{1}$ page is given by tensoring the Koszul resolution of $\mathbb{Z}$ as a $CH^{*}(BT)$-module with $H^{*}_{T}(X, \mathbb{Z}(*)):$ $$H^{*}_{T}(X, \mathbb{Z}(*)) \otimes \bigwedge \nolimits^{*}(t_{1}, \ldots ,t_{m})$$ where the differentials are given by the usual antiderivation on $\bigwedge \nolimits^{*}(t_{1}, \ldots ,t_{m}).$ The $E_{2}$ page of the first spectral sequence is hence given by $$E_{2}^{p,q}=\textup{Tor}^{CH^{*}(BT)}_{-p,q,j}(\mathbb{Z}, H^{*}_{T}(X, \mathbb{Z}(*))).$$ The second and third spectral sequences can then be obtained from the first after inverting $t_{G}:$ \begin{equation} \label{toriso}\textup{Tor}^{CH^{*}(BT)}_{-p,q,j}(\mathbb{Z}, H^{*}_{T}(X, \mathbb{Z}(*)))[t_{G}^{-1}] \cong \textup{Tor}^{CH^{*}(BG)}_{-p,q,j}(\mathbb{Z}, H^{*}_{G}(X, \mathbb{Z}(*)))[t_{G}^{-1}].\end{equation} This isomorphism uses that $$H^{*}_{T}(X, \mathbb{Z}(*))[t_{G}^{-1}] \cong H^{*}_{G}(X, \mathbb{Z}(*)) \otimes _{CH^{*}(BG)} CH^{*}(BT)[t_{G}^{-1}]$$ \cite[Theorem 16.1]{Tot2} along with flat base change for Tor \cite[Proposition 3.2.9]{Wei}. We can say that the third spectral sequence is a second-quadrant spectral sequence since negative degree motivic cohomology is guaranteed to vanish in this case by Proposition \ref{BeiSou}.
\end{proof}

\section{Motivic cohomology of $G$}
Let $G$ denote a split reductive group over an arbitrary base field with a Borel subgroup $B \subset G.$ The Chow ring of $G$ is known (over arbitrary fields), from Grothendieck \cite{Gro}:
$$CH^{*}(G) \cong CH^{*}(G/B)/I$$ where $I \subset CH^{*}(G/B)$ is the ideal generated by the image of $CH^{*}(BT) \to CH^{*}(G/B).$ (This can also be deduced from the integral Eilenberg-Moore spectral sequence.) From the definition of the torsion index, we then get the following lemma.
\begin{lemma} \label{ChowG} Let $t_{G} \in \mathbb{N}$ denote the torsion index of $G.$ Then $CH^{m}(G)[t_{G}^{-1}]=0$ for $m>0.$
\end{lemma}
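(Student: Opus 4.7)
The plan is to reduce the statement to the surjectivity of $CH^{*}(BT) \to CH^{*}(G/B)$ after inverting $t_{G}$. Given the presentation $CH^{*}(G) \cong CH^{*}(G/B)/I$, where $I$ is the ideal generated by the image of $CH^{>0}(BT) \to CH^{*}(G/B)$, it suffices to show that every positive-degree class in $CH^{*}(G/B)[t_{G}^{-1}]$ lies in the subring generated by the image of $CH^{*}(BT)$. Since $CH^{1}(G/B) = \textup{Pic}(G/B)$ is already in that image via $X^{*}(T) = CH^{1}(BT)$, this reduces to the assertion that the ring map $CH^{*}(BT)[t_{G}^{-1}] \to CH^{*}(G/B)[t_{G}^{-1}]$ is surjective.

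Rationally, this surjectivity is Borel's theorem: $CH^{*}(G/B) \otimes \mathbb{Q}$ is the quotient of $S^{*}(X^{*}(T))_{\mathbb{Q}}$ by the ideal generated by positive-degree $W$-invariants, so the map from $S^{*}(X^{*}(T)) \otimes \mathbb{Q}$ is surjective, and the integral cokernel is finite in each degree. By the very definition of $t_{G}$, the cokernel in top degree $N = \dim(G/B)$ has order $t_{G}$. Using the perfect intersection pairing $CH^{i}(G/B) \otimes CH^{N-i}(G/B) \to CH^{N}(G/B) \cong \mathbb{Z}$ together with the fact that the image of $S^{*}(X^{*}(T))$ is a subring, I would conclude that the cokernel in each intermediate degree $i$ is annihilated by $t_{G}$, in the spirit of Grothendieck's analysis in \cite{Gro}. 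Inverting $t_{G}$ then kills every cokernel, giving the surjectivity, after which $I[t_{G}^{-1}] \supseteq CH^{>0}(G/B)[t_{G}^{-1}]$ and the lemma follows.

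The main technical obstacle is carrying out the Poincar\'e-duality bookkeeping carefully: one needs to be certain that the saturation of the image in each intermediate degree is indeed controlled by the top-degree index $t_{G}$. A cleaner alternative, well-aligned with the paper's subsequent methodology, is to apply the Eilenberg--Moore spectral sequence (Theorem \ref{EilenbergMoore}(2)) to $G$ acting on itself by translation. Since the action is free with quotient $\textup{Spec}(k)$, one has $H^{*}_{G}(G, \mathbb{Z}(*)) \cong H^{*}(k, \mathbb{Z}(*))$, and using the identification (\ref{ChowTorsion}) a Koszul computation identifies the $E_{2}$-page with $H^{*}(k, \mathbb{Z}(*))[t_{G}^{-1}] \otimes \bigwedge\nolimits^{*}(y_{1}, \ldots, y_{m})$, where $y_{i}$ sits in bidegree $(2r_{i}-1, r_{i})$. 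A short bidegree count in the Chow range $i = 2j$, using the vanishing $H^{a}(k, \mathbb{Z}(b)) = 0$ for $a > b$, forces the surviving contribution to concentrate in bidegree $(0,0)$, yielding $CH^{m}(G)[t_{G}^{-1}] = 0$ for $m > 0$ at once.
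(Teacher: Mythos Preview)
Your first approach is exactly the paper's: it invokes Grothendieck's presentation $CH^{*}(G) \cong CH^{*}(G/B)/I$ and then appeals to the definition of $t_{G}$ to obtain surjectivity of the characteristic map $CH^{*}(BT)[t_{G}^{-1}] \to CH^{*}(G/B)[t_{G}^{-1}]$. The paper says no more than ``from the definition of the torsion index, we then get the following lemma,'' so your Poincar\'e-duality remarks actually supply detail the paper leaves to \cite{Gro}.

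Your second approach, via the Eilenberg--Moore spectral sequence, is a genuinely different and self-contained route. The paper only mentions that spectral sequence parenthetically as an alternative derivation of Grothendieck's presentation of $CH^{*}(G)$, whereas you use it to bypass that presentation altogether: for the fibration $G \to \textup{Spec}(k) \to BG$ with $t_{G}$ inverted, the Koszul $E_{2}$-page is $H^{*}(k,\mathbb{Z}(*))[t_{G}^{-1}] \otimes \bigwedge\nolimits^{*}(dy_{1},\ldots,dy_{m})$, and your bidegree count (using only the easy vanishing $H^{a}(k,\mathbb{Z}(b))=0$ for $a>b$) correctly forces the Chow line $(2m,m)$ to be concentrated at $m=0$. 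This has the advantage of sidestepping the Poincar\'e-duality bookkeeping you flagged, and it is in fact the same engine the paper runs (with $\mathbb{Z}/p$ coefficients) in Proposition~\ref{corCohG} to compute $H^{*}(G,\mathbb{Z}/p(*))$; in effect you are observing that the lemma is already a shadow of that later computation.
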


\indent For the rest of this section, our goal is to compute $H^{*}(G, \mathbb{Z}/p(*))$ where $G$ is a split reductive group over a perfect field $k$ with $\textup{char}(k)=p>0$ a non-torsion prime of $G.$ Our proof is similar to arguments given by Biglari in \cite{Big}, where rational motives of split reductive groups are computed (a similar proof can be used in topology as well). We also note that Pushin computed the integral motivic cohomology of $GL_{n}$ over a field $F$ for all $n,$ showing that it is almost an exterior algebra: $$H^{*}(GL_{n}, \mathbb{Z}(*)) \cong H^{*}(F, \mathbb{Z}(*)) \otimes \bigwedge \nolimits^{*}(c_{1,1}, c_{1,2}, \ldots, c_{1,n})$$ where $c_{1,i} \in H^{2i-1}(G, \mathbb{Z}(i))$ and $c_{1,i}^{2}=\rho c_{1,2i-1}$ for all $i$ where $\rho \in H^{1}(F, \mathbb{Z}(1))\cong F^{\times}$ is the class of $-1$ \cite[Proposition 2]{Pus}.

\indent From our assumptions on $k,$ we have $H^{*}(k, \mathbb{Z}/p(*)) \cong \mathbb{Z}/p$ (see remark \ref{remarkField}).
\begin{proposition} \label{corCohG} Let $G$ be a split reductive group over a field $k.$ Let $r_{1} ,\ldots , r_{m}$ denote the fundamental degrees of $G.$
%\begin{enumerate}
%\item There is an isomorphism \begin{equation} \label{cohG1}H^{*,*}(G, \mathbb{Z})[t_{G}^{-1}] \cong H^{*,*}(k, \mathbb{Z}) \otimes \bigwedge(x_{1}, \ldots , x_{m})[t_{G}^{-1}]\end{equation} of graded $H^{*,*}(k, \mathbb{Z})$-modules where $x_{i} \in H^{2r_{i}-1,r_{i}}(G, \mathbb{Z})[t_{G}^{-1}]$ for all $i.$
%\item 
Assume that $k$ is perfect and $\textup{char}(k)=p>0$ is a non-torsion prime of $G.$ Then \begin{equation} \label{cohG} H^{*}(G, \mathbb{Z}/p(*)) \cong \bigwedge \nolimits^{*}(x_{1}, \ldots , x_{m}) \end{equation} as graded rings where $x_{i} \in H^{2r_{i}-1}(G, \mathbb{Z}/p(r_{i}))$ for all $i.$
%\end{enumerate}
\end{proposition}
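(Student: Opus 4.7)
The plan is to apply Krishna's mod $p$ motivic Eilenberg--Moore spectral sequence (Theorem \ref{EilenbergMoore}(3)) to the free action of $G$ on itself by left multiplication, adapting Biglari's rational argument \cite{Big}. Because $p$ is non-torsion for $G$, this spectral sequence is available with $\mathbb{F}_p$-coefficients.

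First I identify the equivariant input. The left-multiplication action of $G$ on itself is free with quotient $\textup{Spec}(k)$, so $H^{*}_{G}(G, \mathbb{F}_p(*)) \cong H^{*}(\textup{Spec}(k), \mathbb{F}_p(*))$. By Geisser--Levine (Remark \ref{remarkField}), since $k$ is perfect of characteristic $p$, this is concentrated in bidegree $(0,0)$ and equal to $\mathbb{F}_p$. Next, since $p \nmid t_G$, reducing the isomorphism (\ref{ChowTorsion}) modulo $p$ gives
\[
CH^{*}(BG)/p \cong \mathbb{F}_p[y_1, \ldots, y_m]
\]
with $y_i \in H^{2r_i}(BG, \mathbb{F}_p(r_i))$. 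The Koszul resolution of $\mathbb{F}_p$ over this polynomial ring then yields
\[
E_2 \cong \textup{Tor}^{\mathbb{F}_p[y_1, \ldots, y_m]}_{*,*,*}(\mathbb{F}_p, \mathbb{F}_p) \cong \bigwedge \nolimits^{*}(x_1, \ldots, x_m),
\]
where $x_i \in E_2^{-1, 2r_i}$ has weight $r_i$, so it can only converge to a class in $H^{2r_i-1}(G, \mathbb{F}_p(r_i))$.

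I then show the spectral sequence degenerates. A differential $d_r$ with $r \geq 2$ sends $E_r^{-1,\,\cdot}$ into $E_r^{r-1,\,\cdot}$, a column with $r - 1 \geq 1$, which vanishes because Theorem \ref{EilenbergMoore}(3) is second-quadrant. Hence each generator $x_i$ is a permanent cycle, and by the Leibniz rule every element of the exterior algebra survives as well; thus $E_2 = E_\infty$. This gives an isomorphism of associated graded rings $\textup{gr}\,H^{*}(G, \mathbb{F}_p(*)) \cong \bigwedge^{*}(x_1,\ldots,x_m)$.

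Finally I upgrade this to a ring isomorphism. For odd $p$, graded commutativity together with $|x_i| = 2r_i - 1$ odd forces $x_i^2 = 0$ directly. In all characteristics one can also argue by bidegree: $x_i^2 \in H^{4r_i - 2}(G, \mathbb{F}_p(2r_i))$ maps to $x_i \wedge x_i = 0$ in $E_\infty^{-2}$, so it lies in $F^{-3}$; but the constraints $\sum r_{j_\ell} = 2r_i$ and $2\sum r_{j_\ell} - k = 4r_i - 2$ on a $k$-fold wedge of weight $2r_i$ in degree $4r_i-2$ force $k = 2$, so $F^{-3} \cap H^{4r_i-2}(G, \mathbb{F}_p(2r_i)) = 0$ and hence $x_i^2 = 0$. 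This, with the multiplicativity of the filtration, promotes the associated-graded isomorphism to the claimed ring isomorphism.

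The main obstacle is the multiplicative extension problem rather than the spectral sequence itself: degeneration is nearly automatic because the exterior generators lie on the boundary column $p = -1$, but producing a genuine ring isomorphism (and not just an isomorphism of associated gradeds) requires the bidegree check above, which is where the non-torsion assumption on $p$ implicitly enters through the clean polynomial structure of $CH^{*}(BG)/p$.
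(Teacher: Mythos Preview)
Your proof is correct and, for the additive part, follows the paper exactly: run the Eilenberg--Moore spectral sequence for $G \to \mathrm{Spec}(k) \to BG$, compute $E_2$ via the Koszul resolution, and observe degeneration.

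For the ring structure you take a genuinely different and more elementary route. The paper invokes the Hopf algebra structure: it uses that the motive of $G$ is mixed Tate to obtain K\"unneth, so that multiplication $G\times G\to G$ gives a comultiplication on $H^*(G,\mathbb{F}_p(*))$, and then appeals to the Borel--Hopf classification of finite connected graded-commutative Hopf algebras over $\mathbb{F}_p$. Your argument instead exploits a bidegree rigidity: for each fixed weight $j$ the $E_2$-page is concentrated in the single row $q=2j$, so only one column contributes to each $H^{d}(G,\mathbb{F}_p(j))$; the filtration is one-step and there is no room for multiplicative extensions, forcing $x_i^2=0$ and giving the exterior algebra on the nose. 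This avoids K\"unneth and the structure theorem entirely. The trade-off is that your argument (both the Leibniz step for degeneration and the filtration step for $x_i^2=0$) tacitly uses multiplicativity of the Eilenberg--Moore spectral sequence, which is standard but not stated in Theorem~\ref{EilenbergMoore}. Note, though, that degeneration itself needs no Leibniz rule: the concentration in the row $q=2j$ already kills every $d_r$ for $r\geq 2$ on degree grounds alone.
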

\begin{proof}
% We prove \ref{cohG}. The proof that \ref{cohG1} holds is similar. 
 As mentioned above, $CH^{*}(BG)/p \cong \mathbb{F}_{p}[y_{1}, \ldots, y_{m}]$ where $$y_{i} \in CH^{r_{i}}(BG)/p\cong H^{2r_{i}}(BG, \mathbb{Z}/p(r_{i})).$$ We then consider the Eilenberg-Moore spectral sequence corresponding to the fibration $G \to \textup{Spec}(k) \to BG$:  \begin{equation} \label{EMSSG} E_{2}^{l,q}=\textup{Tor}^{CH^{*}(BG)/p}_{-l,q,j}(\mathbb{Z}/p, H^{*}(\textup{Spec}(k), \mathbb{Z}/p(*))) \Rightarrow H^{l+q}(G, \mathbb{Z}/p(j)).\end{equation} The Tor groups can be computed here by tensoring the Koszul resolution of $\mathbb{Z}/p$ as a module over the polynomial ring $CH^{*}(BG)/p$ with $H^{*}(\textup{Spec}(k), \mathbb{Z}/p(*))\cong \mathbb{Z}/p$ \cite[Corollary 4.5.5]{Wei}: $$0 \to (\mathbb{Z}/p)^{\binom{m}{m}}  \to \cdots \to (\mathbb{Z}/p)^{\binom{m}{1}} \to (\mathbb{Z}/p) \to 0.$$ Here, the differentials in this sequence are all $0,$ and $$y_{i_{1}} \wedge \cdots \wedge y_{i_{j}} \in  \bigwedge\nolimits^{j}(y_{1}, \ldots, y_{m}) \cong (\mathbb{Z}/p)^{\binom{m}{j}}$$ has bidegree $(2(r_{i_{1}}+ \cdots + r_{i_{j}}),r_{i_{1}}+ \cdots + r_{i_{j}}) $ for all $j \leq m$ and $i_{1}, \ldots, i_{j} \leq m.$ It follows that the spectral sequence \ref{EMSSG} degenerates, giving the isomorphism \ref{cohG} as vector spaces.

\indent To show that \ref{cohG} respects multiplication, we use that $H^{*}(G, \mathbb{Z}/p(*))$ has the structure of a Hopf algebra. See \cite[IV.2 and VII.1]{MimTod} for some relevant facts about Hopf algebras. The motive of $G$ is mixed Tate \cite[Proposition 4.2]{Big}. (This uses the Bruhat decomposition of $G.$) It follows that the K\"{u}nneth spectral sequence \cite[Theorem 8.6]{DugIs}, \cite[Theorem 4.5]{Jos} degenerates, giving $$H^{*}(G\times G, \mathbb{Z}/p(*)) \cong H^{*}(G, \mathbb{Z}/p(*)) \otimes_{\mathbb{Z}/p} H^{*}(G, \mathbb{Z}/p(*)).$$  We then define a comultiplication map $$\phi: H^{*}(G, \mathbb{Z}/p(*)) \to H^{*}(G, \mathbb{Z}/p(*)) \otimes_{\mathbb{Z}/p} H^{*}(G, \mathbb{Z}/p(*))$$ using the multiplication map $G \times G \to G.$ We then view $H^{*}(G, \mathbb{Z}/p(*))$ as being a Hopf algebra over $\mathbb{F}_{p}$, graded by $-(\textup{deg}(*)-w(*)).$  From the classification of graded, connected, and commutative finite quasi-Hopf algebras over $\mathbb{F}_{p},$ we then get that \ref{cohG} is an isomorphism of rings \cite[Theorem VII.1.7]{MimTod}.
\end{proof}
%\begin{remark} In \cite{Big}, Biglari obtains a similar result for rational motives and also shows that the isomorphism \ref{cohG} (with rational coefficients) preserves products. It is not immediately clear to us whether Biglari's results could be modified to prove \ref{cohG} for all non-torsion primes $p$ of $G.$ Integers other than the torsion index have to be inverted in \cite{Big} so that certain symmetric and alternating powers of motives are defined. 

%\indent We also note that Pushin computed the integral motivic cohomology of $GL_{n}$ over a field $F$ for all $n,$ showing that $$H^{*,*}(GL_{n}, \mathbb{Z}) \cong \bigwedge(c_{1,1}, c_{1,2}, \ldots, c_{1,n})$$ as $H^{*,*}(F, \mathbb{Z})-$modules where $c_{1,i} \in H^{2i-1,i}(G, \mathbb{Z})$ and $c_{1,i}^{2}=\rho c_{1,2i-1}$ for all $i$ where $\rho \in H^{1,1}(F, \mathbb{Z})\cong F^{\times}$ is the class of $-1$ \cite[Proposition 2]{Pus}.
%\end{remark}
\section{Motivic cohomology of $BG_{(r)}$}

\indent Let $G$ be a split reductive group over a field $k$ of characteristic $p>0$ where $p$ is a non-torsion prime of $G.$ We start by computing $CH^{*}(BG_{(r)})/p.$ Let $T \subset G$ denote a split torus. Our main tool will be the Eilenberg-Moore spectral sequences corresponding to the fibration $$G \cong G/G_{(r)} \to BG_{(r)} \to BG:$$ \begin{equation} \label{EMSSG_r}E_{2}^{l,q}=\textup{Tor}^{CH^{*}(BG)/p}_{-l,q,j}(\mathbb{Z}/p, H^{*}(BG_{(r)}, \mathbb{Z}/p(*))) \Rightarrow H^{l+q}(G, \mathbb{Z}/p(j)).\end{equation} Ultimately, we will show that these spectral sequences degenerate and that the $E_{2}$ pages are concentrated in the $0$th column.

\begin{lemma} \label{lemmachowBG}
Pullback induces an isomorphism $CH^{*}({BG})/p \cong CH^{*}({BG_{(r)}})/p.$
\end{lemma}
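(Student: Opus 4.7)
The plan is to prove the lemma by induction on the weight $j$, running the motivic Eilenberg--Moore spectral sequence \ref{EMSSG_r} in each weight and combining it with the vanishing $H^i(X,\mathbb{F}_p(j))=0$ for $i>2j$ on any smooth $X$. Setting $R=CH^*(BG)/p\cong\mathbb{F}_p[y_1,\ldots,y_m]$ and $M=H^*(BG_{(r)},\mathbb{F}_p(*))$ as a graded $R$-module via $\pi^*$, the induction will simultaneously establish $CH^j(BG_{(r)})/p=H^{2j}(BG_{(r)},\mathbb{F}_p(j))$ and that $\pi^*\colon CH^j(BG)/p\to CH^j(BG_{(r)})/p$ is an isomorphism; the case $j=0$ is immediate.

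For the inductive step $j>0$, I would first invoke Lemma \ref{ChowG} and the non-torsion hypothesis to get $H^{2j}(G,\mathbb{F}_p(j))=CH^j(G)/p=0$, forcing $E_\infty^{0,2j}=0$. The central claim is that $E_2^{0,2j}$ already vanishes. Since the spectral sequence sits in the second quadrant, there are no outgoing differentials from $E_r^{0,2j}$, so it suffices to kill the sources of the incoming differentials $d_r\colon E_r^{-r,2j+r-1}\to E_r^{0,2j}$ for $r\geq 2$. Computing $E_2^{-r,2j+r-1}=\mathrm{Tor}^R_r(\mathbb{F}_p,M)_{2j+r-1,j}$ via the Koszul resolution of $\mathbb{F}_p$ over $R$ presents it as a subquotient of $(M\otimes\bigwedge^r(y_1,\ldots,y_m))_{2j+r-1,j}$; a nonzero basis vector $m\otimes y_{k_1}\wedge\cdots\wedge y_{k_r}$ in this bidegree forces $\deg(m)-2w(m)=r-1\geq 1$, placing $m\in M$ strictly above the Chow diagonal, where motivic cohomology vanishes. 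So all sources are zero and $E_2^{0,2j}=E_\infty^{0,2j}=0$.

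Unpacking $E_2^{0,2j}=(M/R_{>0}M)_{2j,j}=0$ says $H^{2j}(BG_{(r)},\mathbb{F}_p(j))$ equals $\sum_k\pi^*(y_k)\cdot H^{2(j-r_k)}(BG_{(r)},\mathbb{F}_p(j-r_k))$; the relevant $H^{2(j-r_k)}$ classes lie on the Chow diagonal of lower weight, and by the inductive hypothesis each is $\pi^*$ of a class in $CH^{j-r_k}(BG)/p$. Therefore $H^{2j}(BG_{(r)},\mathbb{F}_p(j))=\pi^*(CH^j(BG)/p)$, and since $\pi^*(CH^j(BG)/p)\subseteq CH^j(BG_{(r)})/p\subseteq H^{2j}(BG_{(r)},\mathbb{F}_p(j))$, both inclusions are equalities and $\pi^*$ is surjective in weight $j$.

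For injectivity I would use a split torus $T\subset G$ and the commutative square of pullbacks
\[
\begin{tikzcd}
CH^*(BG)/p \arrow[r,"\pi^*"]\arrow[d,hook] & CH^*(BG_{(r)})/p \arrow[d]\\
CH^*(BT)/p \arrow[r,"\sim"] & CH^*(BT_{(r)})/p.
\end{tikzcd}
\]
The left column is injective because $p\nmid t_G$ lets \ref{ChowTorsion} identify $CH^*(BG)/p$ with the Weyl invariants $(CH^*(BT)/p)^W$, and the bottom row is an isomorphism by Proposition \ref{CohBmu} applied factorwise to $T=\mathbb{G}_m^m$ and $T_{(r)}=\mu_{p^r}^m$. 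Hence the composition $CH^*(BG)/p\to CH^*(BT_{(r)})/p$ is injective, and so is $\pi^*$. The main delicate point is the vanishing of the incoming differential sources, which crucially combines the Koszul form of $\mathrm{Tor}$ with the Chow-diagonal bound on motivic cohomology of smooth schemes.
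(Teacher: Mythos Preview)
Your proof is correct and follows essentially the same route as the paper. Both arguments establish injectivity via the commutative square through $CH^*(BT)/p\cong CH^*(BT_{(r)})/p$, and both establish surjectivity by showing $E_2^{0,2n}=E_\infty^{0,2n}=0$ in the Eilenberg--Moore spectral sequence \ref{EMSSG_r}. The paper simply asserts $E_\infty^{0,2n}\cong E_2^{0,2n}$ without comment, whereas you supply the reason (the sources $E_2^{-r,2n+r-1}$ vanish because the Koszul computation forces the relevant classes in $M$ to lie strictly above the Chow diagonal); this is exactly the justification that step needs. Your explicit induction on $j$ is slightly more elaborate than necessary, since once $(M/R_{>0}M)_{2n,n}=0$ the bidegree of $y_k$ already forces the cofactors to lie back on the Chow diagonal, so the recursion $CH^n(BG_{(r)})/p=\sum_k\pi^*(y_k)\cdot CH^{n-r_k}(BG_{(r)})/p$ unwinds immediately; and the equality $CH^j(BG_{(r)})/p=H^{2j}(BG_{(r)},\mathbb F_p(j))$ holds a priori rather than needing to be carried through the induction.
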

\begin{proof}
From the isomorphism \ref{ChowTorsion}, we see that the pullback map $CH^{*}(BG)/p \to CH^{*}(BT)/p$ is injective. We have $T_{(r)} \cong \mu_{p^{r}}^{\textup{rank}(G)}.$ From the calculation of Proposition \ref{CohBmu}, the pullback map $CH^{*}(BT)/p \to CH^{*}(BT_{(r)})/p$ is an isomorphism. It follows that the pullback map $CH^{*}(BG)/p \to CH^{*}(BG_{(r)})/p$ is injective.
\[
\begin{tikzcd}
CH^{*}(BG)/p \arrow[r, ""] \arrow[d, ""] & CH^{*}(BT)/p \arrow[d, ""] \\
CH^{*}(BG_{(r)})/p \arrow[r, ""] & CH^{*}(BT_{(r)})/p 
\end{tikzcd}
\]
\indent From Lemma \ref{ChowG}, we have $H^{2n}(G, \mathbb{Z}/p(n))=0$ for all $n \in \mathbb{N}.$ Hence, in the spectral sequence \ref{EMSSG_r} with $j=n,$  $$0=E_{\infty}^{0,2n} \cong E_{2}^{0,2n}=\textup{Tor}^{CH^{*}(BG)/p}_{0,2n,n}(\mathbb{Z}/p, H^{*}(BG_{(r)}, \mathbb{Z}/p(*)))$$ for all $n \in \mathbb{N}.$ It follows that the map $ CH^{*}(BG)/p \to CH^{*}(BG_{(r)})/p$ is surjective, and hence an isomorphism
\end{proof}
\begin{definition} For a motivic space $X$ (a scheme or classifying space in this paper) and $i \geq 0,$ define $$M_{i}(X) \coloneqq \bigoplus^{\infty}_{n=0} H^{2n-i}(X,\mathbb{Z}/p(n)).$$ Each $M_{i}(X)$ is naturally a $CH^{*}(X)/p$-module.
\end{definition}
\begin{theorem} \label{theoremMotcohBG_r}
Assume that $k$ is a perfect. There is an isomorphism \begin{equation} \label{iso} H^{*}(BG_{(r)}, \mathbb{Z}/p(*)) \cong CH^{*}(BG)/p \otimes_{H^{*}(k,\mathbb{Z}/p(*))}H^{*}(G, \mathbb{Z}/p(*))\end{equation} of graded $CH^{*}(BG)/p$-modules. If $p>2,$ this is also an isomorphism of graded rings.
\end{theorem}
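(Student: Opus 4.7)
I apply the Eilenberg--Moore spectral sequence of Theorem~\ref{EilenbergMoore} to the fibration $G \cong G/G_{(r)} \to BG_{(r)} \to BG$:
\[
E_2^{l, q} = \textup{Tor}^{CH^*(BG)/p}_{-l, q, j}(\mathbb{Z}/p, H^*(BG_{(r)}, \mathbb{Z}/p(*))) \Rightarrow H^{l+q}(G, \mathbb{Z}/p(j)),
\]
whose abutment is $\bigwedge\nolimits^*(x_1, \ldots, x_m)$ by Proposition~\ref{corCohG}. The plan is to show this spectral sequence is concentrated in column $l = 0$ already on the $E_2$ page, which yields the stated decomposition directly.

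First, I construct classes $g_i \in H^{2r_i - 1}(BG_{(r)}, \mathbb{Z}/p(r_i))$ restricting to the generators $x_i$. By Proposition~\ref{proptorsion}, $y_i \in CH^{r_i}(BG_{(r)})_{(p)}$ is annihilated by $p^{rr_i}$, so $p^{rr_i - 1} y_i$ is $p$-torsion; the integral Bockstein exact sequence then produces $g_i$ with $\delta(g_i) = p^{rr_i - 1} y_i$. A comparison with the Eilenberg--Moore spectral sequence for $G \to \textup{Spec}(k) \to BG$ --- in which the Koszul generator $\eta_i$ detects $x_i$ --- together with Bockstein naturality, verifies that $g_i|_G$ is a nonzero multiple of $x_i$; after rescaling one arranges $g_i|_G = x_i$. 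Combined with pullbacks of $y_i \in CH^*(BG)/p$ via Lemma~\ref{lemmachowBG}, these classes yield a $CH^*(BG)/p$-module map
\[
\phi: CH^*(BG)/p \otimes_{H^*(k, \mathbb{Z}/p(*))} \bigwedge\nolimits^*(g_1, \ldots, g_m) \to H^*(BG_{(r)}, \mathbb{Z}/p(*)).
\]
When $p > 2$, each $g_i$ has odd total degree $2r_i - 1$, so $g_i^2 = 0$ by graded commutativity and $\phi$ is a ring homomorphism.

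To show $\phi$ is an isomorphism, note that the products $g_I = g_{i_1} \cdots g_{i_k}$ restrict to the basis elements $x_I$ of $H^*(G, \mathbb{Z}/p(*))$, so the restriction map $\textup{res}: H^*(BG_{(r)}, \mathbb{Z}/p(*)) \to H^*(G, \mathbb{Z}/p(*))$ is surjective. Via the edge-map identification $E_\infty^{0, *} = \textup{Im}(\textup{res})$, this forces $E_\infty^{0, *} = H^*(G, \mathbb{Z}/p(*))$ and $E_\infty^{l, *} = 0$ for $l < 0$. To upgrade to $E_2$-concentration --- equivalently, freeness of $H^*(BG_{(r)}, \mathbb{Z}/p(*))$ over $CH^*(BG)/p$ --- I compare Poincar\'e series using the Koszul Euler identity
\[
\sum_{k \geq 0} (-1)^k P_{\textup{Tor}_k}(t, u) = P_{H^*(BG_{(r)})}(t, u) \cdot \prod_{i = 1}^m (1 - t^{2r_i} u^{r_i}),
\]
paired with the lower bound $P_{\textup{Tor}_0} \geq P_{H^*(G)} = \prod_i (1 + t^{2r_i - 1} u^{r_i})$ (from $\textup{Tor}_0 \twoheadrightarrow E_\infty^{0, *}$) and the upper bound $P_{H^*(BG_{(r)})} \leq \prod_i (1 + t^{2r_i - 1} u^{r_i})/(1 - t^{2r_i} u^{r_i})$ (from surjectivity of $\phi$ via graded Nakayama). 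These constraints force $P_{\textup{Tor}_k} = 0$ for $k \geq 1$ and make $\phi$ an isomorphism.

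The hardest step is verifying $g_i|_G \neq 0$: the Bockstein relation alone only forces $g_i|_G \in \ker(\delta) = \mathbb{Z}/p \cdot x_i$, and excluding $g_i|_G = 0$ requires a careful multiplicative comparison with the reference spectral sequence for $G \to \textup{Spec}(k) \to BG$ to track how the Koszul class $\eta_i$ interacts with the chosen lifts.
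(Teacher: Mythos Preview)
There are two genuine gaps, and the Poincar\'e-series step does not close either of them.

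\textbf{First gap: surjectivity of $\phi$ is not established.} From $g_I|_G = x_I$ you conclude that the restriction map $M \coloneqq H^{*}(BG_{(r)},\mathbb{Z}/p(*)) \to H^{*}(G,\mathbb{Z}/p(*))$ is surjective. Since the $y_i$ restrict to $0$ in $CH^{>0}(G)/p$, this restriction factors through $M/\mathfrak{m}M = \textup{Tor}_0$, so you only obtain a surjection $M/\mathfrak{m}M \twoheadrightarrow H^{*}(G,\mathbb{Z}/p(*))$. Graded Nakayama would give surjectivity of $\phi$ only if the images of the $g_I$ span $M/\mathfrak{m}M$, i.e.\ only if this surjection is an isomorphism. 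But that is exactly the statement that no higher differential $d_r:E_r^{-r,*}\to E_r^{0,*}$ is nonzero, which is what has to be proved. Without surjectivity of $\phi$ you have no upper bound on $P_{H^{*}(BG_{(r)})}$, so the Euler identity gives no conclusion. Even if you grant surjectivity of $\phi$ (hence $P_{\textup{Tor}_0}=P_{H^{*}(G)}$), the alternating sum $\sum_{k\ge 1}(-1)^{k-1}P_{\textup{Tor}_k}=P_{H^{*}(G)}-P_M\prod_i(1-t^{2r_i}u^{r_i})\ge 0$ does not force the individual $P_{\textup{Tor}_k}$ to vanish; in column $-1$, for instance, nothing leaves (all $d_r$ land in columns $\ge 1$), so $E_\infty^{-1,*}=0$ only says that $\textup{Tor}_1$ is covered by images of differentials from $\textup{Tor}_{\ge 3}$, not that it is zero.

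\textbf{Second gap: $g_i|_G\neq 0$ is not proved.} Proposition~\ref{proptorsion} gives $p^{rr_i}\tilde y_i=0$, so a Bockstein lift $g_i$ exists, but it could satisfy $g_i|_G=0$; ruling this out is precisely the content of Proposition~\ref{propositiontorsion} and Corollary~\ref{bocksteiniso}, whose proofs in the paper \emph{use} Theorem~\ref{theoremMotcohBG_r}. Your sketch (``Bockstein naturality plus comparison with the spectral sequence for $G\to \textup{Spec}(k)\to BG$'') is not an argument, and you acknowledge this is the hardest step.

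The paper's proof avoids both problems by working directly with the spectral sequence, with no Bockstein classes and no counting. Set $M_i=\bigoplus_n H^{2n-i}(BG_{(r)},\mathbb{Z}/p(n))$ and induct on $i$. The base case $M_0=CH^{*}(BG)/p$ is Lemma~\ref{lemmachowBG}. Assuming $M_j$ is free over $CH^{*}(BG)/p$ for $j\le i$, all higher Tor of $M_{\le i}$ vanishes; since the differential $d_r$ raises the $i$-index by $r-1\ge 1$, no differential can hit the $M_{i+1}$-part of column $0$, so $E_2^{0,*}=E_\infty^{0,*}$ there, i.e.\ $M_{i+1}/\mathfrak{m}M_{i+1}\cong M_{i+1}(G)$. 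This simultaneously gives surjectivity of restriction at level $i+1$ (so no explicit lifts are needed) and the Nakayama input. Freeness of $M_{i+1}$ is then proved by contradiction: a minimal relation would produce a nonzero class in $E_2^{-1,*}$ with no possible incoming or outgoing differential, surviving to $E_\infty^{-1,*}$ and contradicting the column-$0$ concentration already established.
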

\begin{proof}      
\indent We first show that the pullback map \begin{equation} \label{pullbacktoG} H^{*}(BG_{(r)}, \mathbb{Z}/p(*)) \to H^{*}(G, \mathbb{Z}/p(*))\end{equation} is surjective. From Proposition \ref{corCohG}, it suffices to show that $$M_{1}(BG_{(r)}) \to M_{1}(G)$$ is surjective. Let $n \in \mathbb{N}$ and consider the spectral sequence \ref{EMSSG_r} with weight equal to $n.$
%\begin{equation} \label{spectralInduct}E_{2}^{l,q}=\textup{Tor}^{CH^{*}(BG)/p}_{-l,q,n}(\mathbb{Z}/p, H^{*,*}(BG_{(r)}, \mathbb{Z}/p)) \Rightarrow H^{l+q,n}(G, \mathbb{Z}/p).\end{equation} 
As $M_{0}(BG_{(r)})$ is free as a $CH^{*}(BG)/p$-module and $H^{i}(BG_{(r)}, \mathbb{Z}/p(j))=0$ when $i>2j,$ we have $E_{2}^{l, 2n-1-l}=0$ for $l \leq -1.$ Hence, $$\mathbb{Z}/p \otimes _{CH^{*}(BG)/p}H^{2n-1}(BG_{(r)}, \mathbb{Z}/p(n))=E_{2}^{0,2n-1} \cong E_{\infty}^{0,2n-1} \cong H^{2n-1}(G, \mathbb{Z}/p(n)).$$ Thus, the map \ref{pullbacktoG} is surjective. From now on, we fix a copy $H^{*}(G, \mathbb{Z}/p(*)) \subset H^{*}(BG_{(r)}, \mathbb{Z}/p(*)),$ as a subspace. If $p>2,$ then we can also view $H^{*}(G, \mathbb{Z}/p(*))$ as being a subring of $H^{*}(BG_{(r)}, \mathbb{Z}/p(*))$ since multiplication in motivic cohomomology is graded-commutative with respect to degree.

\indent Notice that the surjection \ref{pullbacktoG} has consequences for the spectral sequence \ref{EMSSG_r}. The $E_{\infty}$ page must be concentrated in the $0$th column with $E_{\infty}^{0,l} \cong H^{l}(G, \mathbb{Z}/p(j))$ for all $l.$ To finish the calculation, we show that the $CH^{*}(BG)/p$-submodule of $H^{*}(BG_{(r)}, \mathbb{Z}/p(*))$ generated by  $H^{*}(G, \mathbb{Z}/p(*)) \subset H^{*}(BG_{(r)}, \mathbb{Z}/p(*))$ is free and equal to $H^{*}(BG_{(r)}, \mathbb{Z}/p(*)).$ We proceed by induction on $i=-(\textup{deg}(*)-w(*)).$ The case $i=0$ is handled above. Now assume that \begin{equation} \label{splitting G} \mathbb{Z}/p \otimes_{CH^{*}(BG)/p}M_{j}(BG_{(r)}) \cong M_{j}(G) \end{equation} and that the $CH^{*}(BG)/p$-submodule generated by $M_{j}(G) \subset M_{j}(BG_{(r)})$ is a free $CH^{*}(BG)/p$-module equal to $M_{j}(BG_{(r)})$ for $j \leq i.$ We use that $$\textup{Tor}^{CH^{*}(BG)/p}_{*}(\mathbb{Z}/p, H^{*}(BG_{(r)}, \mathbb{Z}/p(*))) \cong \bigoplus^{\infty}_{j=0} \textup{Tor}^{CH^{*}(BG)/p}_{*}(\mathbb{Z}/p, M_{j}(BG_{(r)})).$$

\indent For $j \in \mathbb{N}$ and $1 \leq l \leq i,$ we have $E_{2}^{-l, 2j-(i+1)+l}=0$ in \ref{EMSSG_r} by our inductive hypothesis. Hence, there aren't any differentials entering $E_{2}^{0,2j-(i+1)}$ in the spectral sequence \ref{EMSSG_r}. Then $$\mathbb{Z}/p \otimes_{CH^{*}(BG)/p} M_{i+1}(BG_{(r)}) \cong M_{i+1}(G)$$ which implies that the $CH^{*}(BG)/p$-submodule generated by $$M_{i+1}(G) \subset M_{i+1}(BG_{(r)})$$ is equal to $M_{i+1}(BG_{(r)}).$

\indent To finish, we need to show that the $CH^{*}(BG)/p$-submodule generated by $M_{i+1}(G) \subset M_{i+1}(BG_{(r)})$ is free. We proceed by induction on weight. Let $n \in \mathbb{N}$ be the smallest number for which $H^{2n-(i+1)}(BG_{(r)}, \mathbb{Z}/p(n)) \neq 0$ (if no such number exists then we are done). Let $z_{1}, \ldots , z_{m_{i}}$ denote an $\mathbb{F}_{p}$-basis of $M_{i+1}(G) \subset M_{i+1}(BG_{(r)}).$ We have $$H^{2n-(i+1)}(G, \mathbb{Z}/p(n)) \cong H^{2n-(i+1)}(BG_{(r)}, \mathbb{Z}/p(n)).$$  Now assume that $f_{1}z_{1}+\cdots + f_{m_{i}}z_{m_{i}} \neq 0$ for all homogeneous $f_{1}, \ldots, f_{m_{i}} \in \mathbb{F}_{p}[y_{1}, \ldots, y_{m}]\cong CH^{*}(BG)/p$ with at least one $f_{j} \neq 0$ and $|f_{a}z_{a}|=|f_{b}z_{b}| \leq l$ for all nonzero terms.

\indent Suppose that $f_{1}z_{1}+\cdots + f_{m_{i}}z_{m_{i}}=0$ for some homogeneous $f_{1}, \ldots, f_{m_{i}} \in \mathbb{F}_{p}[y_{1}, \ldots, y_{m}]$, not all zero, with $|f_{a}z_{a}|=|f_{b}z_{b}| =l+1$ for all nonzero terms. As the $z_{1}, \ldots, z_{m_{i}}$ form a basis of $M_{i+1}(G) \subset M_{i+1}(BG_{(r)})$ and $f \mapsto 0$ under the pullback $$H^{*}(BG_{(r)}, \mathbb{Z}/p(*)) \to H^{*}(G, \mathbb{Z}/p(*))$$ for all nonconstant $f \in CH^{*}(BG)/p$ with $|f|>0,$ we must have $f_{j} \not \in \mathbb{Z}/p$ for all nonzero $f_{j}.$ We can then write $f_{1}z_{1}+\cdots + f_{m_{i}}z_{m_{i}}=y_{1}z'_{1}+y_{2}z'_{2}+\cdots +y_{m}z'_{m}$ for some $z'_{1}, \ldots, z'_{m} \in  M_{i+1}(BG_{(r)}).$ Here, we require that $$f_{1}\otimes z_{1}+\cdots + f_{m_{i}}\otimes z_{m_{i}}=[y_{1}z'_{1}]+[y_{2}z'_{2}]+\cdots +[y_{m}z'_{m}]$$ in 
$$CH^{*}(BG)/p \otimes_{\mathbb{Z}/p} H^{*}(G, \mathbb{Z}/p(*))$$ where $[y_{a}z'_{a}] \mapsto y_{a}z'_{a}$ for all $a$ under the map $$CH^{*}(BG)/p \otimes_{\mathbb{Z}/p} H^{*}(G, \mathbb{Z}/p(*)) \to H^{*}(BG_{(r)}, \mathbb{Z}/p(*)).$$

\indent Consider the complex $$H^{*}(BG_{(r)}, \mathbb{Z}/p(*)) \otimes_{\mathbb{Z}/p} \bigwedge \nolimits^{*}(y_{1}, \ldots, y_{m})$$ that computes $\textup{Tor}^{CH^{*}(BG)/p}_{*}(\mathbb{Z}/p, H^{*}(BG_{(r)}, \mathbb{Z}/p(*))).$ The differentials $d$ are given by the usual antiderivation on $\bigwedge^{*}(y_{1}, \ldots, y_{m}).$ We have $d(z'_{1} dy_{1}+\cdots+ z_{m}'dy_{m})=y_{1}z'_{1}+y_{2}z'_{2}+\cdots +y_{m}z'_{m}=0$ and $$z'_{1} dy_{1}+\cdots+ z_{m}'dy_{m} \not \in d(H^{*}(BG_{(r)}, \mathbb{Z}/p(*)) \otimes_{\mathbb{Z}/p} \bigwedge \nolimits^{2}(y_{1}, \ldots, y_{m}))$$ by our inductive assumption that there are no $CH^{*}(BG)/p$-relations in $M_{i+1}(BG_{(r)})$ in weights $<l+1.$ Hence, $z'_{1} dy_{1}+\cdots+ z_{m}'dy_{m}$ gives a nonzero element in $$E_{2}^{-1,2(l+1)-(i+1)}=\textup{Tor}^{CH^{*}(BG)/p}_{1, 2(l+1)-(i+1), l+1 }(\mathbb{Z}/p, H^{*}(BG_{(r)}, \mathbb{Z}/p(*)))$$ in the spectral sequence \ref{EMSSG_r} with weight equal to $l+1.$ All differentials leaving $E_{j}^{-1,2(l+1)-(i+1)}$ for $j \geq 2$ must be equal to $0$ since \ref{EMSSG_r} is a second-quadrant spectral sequence. All differentials entering $E_{j}^{-1,2(l+1)-(i+1)}$ for $j \geq 2$ must be equal to $0$ since $E_{2}^{-q, 2(l+1)-(i+1)+q-1}=0$ for all $q>1$ by our inductive assumption that $M_{r}(BG_{(r)})$ is a free $CH^{*}(BG)/p$-module for $r<i+1.$ Then $E_{2}^{-1,2(l+1)-(i+1)} \cong E_{\infty}^{-1,2(l+1)-(i+1)} \neq 0,$ which is a contradiction since we proved above that the $E_{\infty}$ page of $\ref{EMSSG_r}$ is concentrated on the $0$th column. Hence, the $CH^{*}(BG)/p$-submodule generated by $$M_{i+1}(G) \subset M_{i+1}(BG_{(r)})$$ is free. This completes the proof.
\end{proof}
\section{Torsion in motivic cohomology} 
Let $G$ be a split reductive group over a field $k$ with $\textup{char}(k)=p>0$ a non-torsion prime of $G.$ Proposition \ref{proptorsion} gives an upper bound on the order of integral cohomology classes of Frobenius kernels. Fix $n \in \mathbb{N}.$ In this section, we determine the exact orders (additive) of generators in the image of the pullback map $$CH^{*}(BG)[t_{G}^{-1}] \to H^{*}(BG_{(n)}, \mathbb{Z}(*))[t_{G}^{-1}].$$
\indent Let $T \subset G$ denote a split torus with $CH^{*}(BT)\cong \mathbb{Z}[t_{1},\ldots,t_{m}]$ and associated Weyl group $W=N_{G}(T)/T.$ Take $y_{i} \in CH^{*}(BG)[t_{G}^{-1}] \cong CH^{*}(BT)[t_{G}^{-1}]^{W} $ to be generators for $i \leq m$ and $|y_{i}|=r_{i}$ as in \ref{ChowTorsion}. For each $i,$ let $\tilde{y}_{i} \in H^{*}(BG_{(r)}, \mathbb{Z}(*))[t_{G}^{-1}]$ denote the image of $y_{i}.$
\begin{proposition} \label{propositiontorsion}
\begin{enumerate}
\item For $i \leq m,$ the order of $\tilde{y}_{i}$ in  $H^{2r_{i}}(BG_{(n)}, \mathbb{Z}(r_{i}))[t_{G}^{-1}]$ is equal to $p^{nr_{i}}.$
\item Let $$0 \neq \tilde{y} \in \mathbb{Z}[t_{G}^{-1}]\tilde{y}_{i_{1}} \oplus \cdots \oplus \mathbb{Z}[t_{G}^{-1}]\tilde{y}_{i_{M}}$$ where $|\tilde{y}_{i_{l}}|=r_{i}$ in $CH^{*}(BG_{(n)})[t_{G}^{-1}]$ for all $l.$ Assume that the additive order of $\tilde{y}$ is equal to $p^{nr_{i}}.$ Then there does not exist a relation of the form \begin{equation} \label{nontrivialrelation}p^{nr_{i}-1}\tilde{y}=p^{nr_{j_{1}}-1}f_{1}\tilde{y}_{j_{1}}+ \cdots +p^{nr_{j_{s}}-1}f_{s}\tilde{y}_{j_{s}}\end{equation} in $CH^{*}(BG_{(n)})[t_{G}^{-1}]$ where $r_{j_{l}}<r_{i}$ for all $l.$
\end{enumerate}
\end{proposition}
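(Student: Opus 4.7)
For \textbf{Part 1}, the upper bound $p^{nr_i}\tilde{y}_i = 0$ is immediate from Proposition \ref{proptorsion}(1): since $y_i \in CH^{r_i}(BG)$ has positive codimension, $\pi^*\tilde{y}_i = 0 \in H^{2r_i}(k, \mathbb{Z}(r_i))$. For the lower bound $p^{nr_i-1}\tilde{y}_i \neq 0$, my plan is to exhibit it as a nonzero integral Bockstein. By Theorem \ref{theoremMotcohBG_r} the class $g_i \in H^{2r_i-1}(BG_{(n)}, \mathbb{F}_p(r_i))$ is nonzero, and I would lift $g_i$ successively through the tower of $\mathbb{Z}/p^k$-coefficients by iterating the mod-$p^k$ Eilenberg--Moore spectral sequence (Theorem \ref{EilenbergMoore}(3)). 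At each stage the obstruction lies in a specific graded piece of $\mathrm{Tor}^{CH^*(BG)/p^k}$, which I expect to vanish for $k \leq nr_i$ by a degree/weight count. The resulting lift $\hat{g}_i \in H^{2r_i-1}(BG_{(n)}, \mathbb{Z}/p^{nr_i}(r_i))$ has integral Bockstein $\delta(\hat g_i)$ that should equal $u \cdot p^{nr_i-1}\tilde{y}_i$ for some $u \in \mathbb{F}_p^\times$; nonvanishing is forced because further lifting to $\mathbb{Z}/p^{nr_i+1}$ would violate the upper bound from Proposition \ref{proptorsion}.

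For \textbf{Part 2}, assume such a relation. Since $p^{nr_l}\tilde{y}_l = 0$ for all $l$, only the mod-$p$ reductions $\bar a_k \in \mathbb{F}_p$ (in an expansion $\tilde y = \sum_k a_k \tilde y_{i_k}$) and $\bar f_l \in CH^{r_i-r_{j_l}}(BG)/p$ contribute. Using the identification $p^{nr_l-1}\tilde{y}_l = u_l^{-1}\delta(g_l)$ from Part 1 together with the derivation property of $\delta$ (and $\delta \circ \rho = 0$), the relation rewrites as $\delta(\omega) = 0$ for
\[
\omega = \sum_k u_{i_k}^{-1}\bar a_k\, g_{i_k} \;-\; \sum_l u_{j_l}^{-1}\bar f_l\, g_{j_l} \in H^{2r_i-1}(BG_{(n)}, \mathbb{F}_p(r_i)),
\]
so $\omega$ must be the mod-$p$ reduction of an integer class. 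The hypothesis that $\tilde y$ has order exactly $p^{nr_i}$ forces some $\bar a_k \neq 0$, hence $\omega$ has a nonzero $y$-degree-zero component in the basis of Theorem \ref{theoremMotcohBG_r}, and the two sums occupy disjoint basis elements (the second sum has strictly positive $y$-degree). Restricting a hypothetical integer lift of $\omega$ along the fibre inclusion $G \hookrightarrow BG_{(n)}$ yields an integer lift of $\sum_k u_{i_k}^{-1}\bar a_k g_{i_k}$ in $H^{2r_i-1}(G, \mathbb{Z}(r_i))$; comparing with Proposition \ref{corCohG} and the Bockstein structure on $H^*(G, \mathbb{F}_p(*))$ forces all $\bar a_k = 0$, the desired contradiction.

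The \textbf{main obstacle} is the Bockstein computation in Part 1: showing, via the $\mathbb{Z}/p^k$-EMSS at each stage, that the obstructions to lifting $g_i$ vanish precisely up through $k = nr_i$, and identifying the terminal Bockstein as $u \cdot p^{nr_i-1}\tilde y_i$. The mod-$p^k$ EMSS $E_2$-pages are less transparent than the mod-$p$ version, and controlling them requires the isomorphism $CH^*(BG)/p \cong CH^*(BG_{(n)})/p$ (Lemma \ref{lemmachowBG}) together with the non-torsion hypothesis on $p$ that makes Theorem \ref{EilenbergMoore}(3) applicable. This analysis is essentially the content of the later Corollary \ref{bocksteiniso}, so in practice Proposition \ref{propositiontorsion} is proved in tandem with (or as a stepping stone toward) that corollary.
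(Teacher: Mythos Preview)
Your proposal has a \textbf{circularity problem in Part 1} and a \textbf{genuine gap in Part 2}.

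\medskip

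\emph{Part 1.} Your strategy is to start from the nonzero class $g_i\in H^{2r_i-1}(BG_{(n)},\mathbb{F}_p(r_i))$ supplied by Theorem~\ref{theoremMotcohBG_r}, lift it up the Bockstein tower, and then identify the terminal integral Bockstein as a unit multiple of $p^{nr_i-1}\tilde y_i$. But you give no mechanism for that identification: the target $H^{2r_i}(BG_{(n)},\mathbb{Z}(r_i))$ is not known at this point, and nothing forces $\delta(\hat g_i)$ to land on $\tilde y_i$ rather than on some other torsion class. You yourself observe that this step is ``essentially the content of Corollary~\ref{bocksteiniso}'', but in the paper Corollary~\ref{bocksteiniso} is \emph{deduced from} Proposition~\ref{propositiontorsion}, not the other way around. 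Also, the obstructions to lifting $g_i$ through $\mathbb{Z}/p^k$ are Bocksteins into $CH^{r_i}(BG_{(n)})/p$, not Tor groups in the mod-$p^k$ EMSS; your ``degree/weight count'' does not address them.

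The paper avoids Bocksteins entirely here. It compares the \emph{integral} Eilenberg--Moore spectral sequences for $G\to G/\!/T\to BT$ and $G/G_{(n)}\to (G/G_{(n)})/\!/T\to BT$ via the Frobenius, which acts on weight-$r_i$ cohomology by $p^{nr_i}$. Writing $y_i=\sum t_jp_j$ in $CH^*(BT)[t_G^{-1}]$, the Koszul element $z=\sum p_j\,dt_j$ is a permanent cycle in the first spectral sequence, while in the second $d_1'(\bar z)=\tilde y_i$, so only $p^a\bar z$ survives, where $p^a$ is the sought order. Chasing $z$ and $p^a\bar z$ through the Frobenius comparison and applying the $CH^*(BG)[t_G^{-1}]$-module retraction of $CH^*(BT)[t_G^{-1}]$ forces $a=nr_i$.

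\medskip

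\emph{Part 2.} Your reduction to $\delta(\omega)=0$ is fine once Part 1 is in hand, but the last step fails. You want to conclude that the restriction $\omega|_G=\sum_k u_{i_k}^{-1}\bar a_k\,g_{i_k}|_G$ cannot lift to $H^{2r_i-1}(G,\mathbb{Z}(r_i))$. But the paper's own integral EMSS computation (carried out in the course of proving Part 1) shows that $E_\infty^{-1,2r_i}\cong \mathbb{Z}[t_G^{-1}]\cdot dy_i$ for the fibration $G\to\mathrm{Spec}(k)\to BG$, so $H^{2r_i-1}(G,\mathbb{Z}(r_i))[t_G^{-1}]$ is free and the reduction map to $H^{2r_i-1}(G,\mathbb{F}_p(r_i))$ is surjective. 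Every nonzero mod-$p$ class in that bidegree of $G$ \emph{does} lift to integers, and no contradiction arises. (Equivalently: $\delta(x_i)\in CH^{r_i}(G)[t_G^{-1}]=0$ by Lemma~\ref{ChowG}, so the generators $x_i$ already lift.)

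The paper instead argues inside the integral Koszul complex for $\overline E$: the hypothetical relation produces a cycle $x=-\sum p^{nr_{j_l}-1}f_l\,dy_{j_l}+p^{nr_i-1}\,dy_i$, which must equal $a\tilde z$ up to a boundary; but the $dy_i$-coefficient $p^{nr_i-1}-ap^{nr_i}$ is never zero, contradicting the boundary condition. This uses only the structure of the Koszul resolution, not any restriction-to-$G$ argument.
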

\begin{proof}
We assume that $k=\mathbb{F}_{p}$ throughout the proof. A base change argument implies the result for any field of characteristic $p.$ If $r_{i}=1,$ then we are done since $CH^{*}(B\mu_{p^{n}}) \cong \mathbb{Z}[x]/(p^{n}x)$ where $|x|=1.$ Assume $r_{i}>1.$ There is a map of fibrations 
\[
\begin{tikzcd}
G \arrow[d, ""] \arrow[r, ""] & G/G_{(n)} \cong G \arrow[d, ""] \\
G//T \arrow[d, ""] \arrow[r, ""] & (G/G_{(n)})//T  \arrow[d, ""] \\
BT \arrow[r, "id."] & BT
\end{tikzcd}
\]
where the horizontal maps are all flat (after choosing suitable approximations of $BT$). We consider the spectral sequences
$$E_{1}^{l,q}=H^{2l+q}_{T}(G, \mathbb{Z}( r_{i}+l))[t_{G}^{-1}] ^{\oplus \binom{m}{-l}} \Rightarrow H^{l+q}(G, \mathbb{Z}(r_{i}))[t_{G}^{-1}],$$
$$\overline{E}_{1}^{l,q}=H^{2l+q}_{T}(G/G_{(n)}, \mathbb{Z}( r_{i}+l))[t_{G}^{-1}] ^{\oplus \binom{m}{-l}} \Rightarrow H^{l+q}(G/G_{(n)}, \mathbb{Z}(r_{i}))[t_{G}^{-1}]$$ with differentials $d, d'$ respectively, from Theorem \ref{EilenbergMoore}. From the naturality of the Eilenberg-Moore spectral sequence, there is a map of spectral sequence $f:\overline{E} \to E$ that is compatible with flat pullback  $$H^{*}(G/G_{(n)}, \mathbb{Z}(*)) \to H^{*}(G, \mathbb{Z}(*)).$$ (The map $f$ can be defined at the level of cycles, using flat pullback.) On the $E_{1}$ page, $f$ is induced by pullback $$H^{*}_{T}(G/G_{(n)}, \mathbb{Z}(*)) \to H^{*}_{T}(G, \mathbb{Z}(*)).$$

\indent From \cite[Theorem 16.1]{Tot2}, $CH^{*}(BT)[t_{G}^{-1}]$ is free as a $CH^{*}(BG)[t_{G}^{-1}]$-module and $$H^{*}_{T}(G, \mathbb{Z}(*))[t_{G}^{-1}] \cong H^{*}(k, \mathbb{Z}(*)) \otimes _{CH^{*}(BG)} CH^{*}(BT) [t_{G}^{-1}],$$ $$H^{*}_{T}(G/G_{(n)}), \mathbb{Z}(*))[t_{G}^{-1}] \cong H^{*}(BG_{(n)}, \mathbb{Z}(*)) \otimes _{CH^{*}(BG)} CH^{*}(BT) [t_{G}^{-1}].$$ Let $CH^{*>0}(BG)CH^{*}(BT)[t_{G}^{-1}]$ denote the ideal of $CH^{*}(BT)[t_{G}^{-1}]$ generated by $CH^{*>0}(BG)[t_{G}^{-1}]$ (the kernel of the pullback map $CH^{*}(BT)[t_{G}^{-1}] \to H^{*}_{T}(G, \mathbb{Z}(*))[t_{G}^{-1}]$).

\indent Write $y_{i}=t_{1}p_{1}+\cdots +t_{m}p_{m}$ for some homogeneous $p_{1}, \ldots, p_{m} \in CH^{*}(BT)[t_{G}^{-1}].$ As $y_{i}$ is a generator of the polynomial ring $CH^{*}(BG)[t_{G}^{-1}]$ and $r_{i}>1,$ we can't have $p_{q} \in CH^{*>0}(BG)CH^{*}(BT)[t_{G}^{-1}]$ for all $q.$ Here, we are using that $CH^{*}(BG)[t_{G}^{-1}] \to CH^{*}(BT)[t_{G}^{-1}]$ admits a retraction \begin{equation} \label{retraction}r: CH^{*}(BT)[t_{G}^{-1}] \to CH^{*}(BG)[t_{G}^{-1}], \end{equation} preserving degree as a map of $CH^{*}(BG)[t_{G}^{-1}]$-modules. See the proof of \cite[Theorem 1.3]{Tot1} for an explicit description of this retraction. We let $$p_{1}, \ldots, p_{m} \in H^{2(r_{i}-1)}_{T}(G, \mathbb{Z}(r_{i}-1))[t_{G}^{-1}], H^{2(r_{i}-1)}_{T}(G/G_{(n)}, \mathbb{Z}(r_{i}-1))[t_{G}^{-1}] $$ denote the pullback of the elements $p_{1}, \ldots, p_{m} \in CH^{*}(BT)[t_{G}^{-1}].$

\indent Then we have a nonzero element \begin{equation} \label{definez} z=p_{1}dt_{1}+\cdots +p_{m}dt_{m} \in E_{1}^{-1, 2r_{i}}\end{equation} with $d_{1}(z)=0,$ and \begin{equation} \label{notnotinz} z \not \in d_{1}(E_{1}^{-2, 2r_{i}}) \end{equation} ($z$ is well-defined up to a boundary cycle). The element $z$ lifts to an element $\overline{z}=p_{1}dt_{1}+\cdots +p_{m}dt_{m} \in \overline{E}_{1}^{-1, 2r_{i}}$ where $d_{1}'(\overline{z})=\tilde{y}_{i} \neq 0$ by Lemma \ref{lemmachowBG}. Let $p^{a}$ denote the order of $\tilde{y}_{i}.$ Then $d_{1}'(p^{a}\overline{z})=0$ and $$p^{a}\overline{z} \not \in d_{1}'(\overline{E}_{1}^{-2, 2r_{i}})$$ by \ref{notnotinz}. As all differentials entering and leaving $E_{h}^{-1, 2r_{i}}$ are $0$ for $h>1,$ we have a nonzero $z \in E_{2}^{-1, 2r_{i}} \cong E_{\infty}^{-1, 2r_{i}}.$ Similarly, $p^{a}\overline{z}$ gives a nonzero element \begin{equation}\label{definetildez}\tilde{z} \in \overline{E}_{\infty}^{-1, 2r_{i}}.\end{equation} Then there exists $x \in H^{2r_{i}-1}(G/G_{(n)}, \mathbb{Z}(r_{i}))[t_{G}^{-1}]$ such that $x \mapsto \tilde{z}$ under the quotient map $$H^{2r_{i}-1}(G/G_{(n)}, \mathbb{Z}(r_{i}))[t_{G}^{-1}] \to \overline{E}_{\infty}^{-1, 2r_{i}} = H^{2r_{i}-1}(G/G_{(n)}, \mathbb{Z}(r_{i}))[t_{G}^{-1}]/\overline{E}_{\infty}^{0, 2r_{i}-1}.$$ From \cite[Theorem 4.6]{Gei}, $x \mapsto p^{nr_{i}}x$ under the map $$H^{2r_{i}-1}(G, \mathbb{Z}(r_{i}))[t_{G}^{-1}] \cong H^{2r_{i}-1}(G/G_{(n)}, \mathbb{Z}(r_{i}))[t_{G}^{-1}]\to H^{2r_{i}-1}(G, \mathbb{Z}(r_{i}))[t_{G}^{-1}].$$ We summarize our situation with the following commuting square.
\[
\begin{tikzcd}
H^{2r_{i}-1}(G/G_{(n)}, \mathbb{Z}(r_{i}))[t_{G}^{-1}] \arrow[d, "x \mapsto p^{nr_{i}}x"] \arrow[rr, "x \mapsto \tilde{z}"]&& \overline{E}_{\infty}^{-1, 2r_{i}} \cong  \overline{E}_{2}^{-1, 2r_{i}}\arrow[d, "\tilde{z} \mapsto p^{a}z"] \\ H^{2r_{i}-1}(G, \mathbb{Z}(r_{i}))[t_{G}^{-1}] \arrow[rr, "p^{nr_{i}}x \mapsto p^{a}z "] && E_{\infty}^{-1, 2r_{i}} \cong E_{2}^{-1, 2r_{i}}
\end{tikzcd}
\] Hence, $p^{a}z=p^{nr_{i}}\tilde{x}$ in $E_{2}^{-1, 2r_{i}}$ for some $\tilde{x} \in E_{2}^{-1, 2r_{i}}.$ Let $q_{1}dt_{1}+\cdots +q_{m}dt_{m}$ denote a lifting of $\tilde{x}$ to $E_{1}^{-1, 2r_{i}}$ where $q_{1}, \ldots, q_{m} \in CH^{*}(BT)[t_{G}^{-1}].$ We then have \begin{equation}\label{lastrelation} p^{a}y_{i}=p^{nr_{i}}(t_{1}q_{1}+\cdots +t_{m}q_{m})+(t_{1}g_{1}+\cdots +t_{m}g_{m})\end{equation} in $CH^{*}(BT)[t_{G}^{-1}]$ for some $$g_{1}, \ldots, g_{m} \in CH^{*>0}(BG)CH^{*}(BT)[t_{G}^{-1}]$$ where $$t_{1}q_{1}+\cdots +t_{m}q_{m} \in CH^{*>0}(BG)CH^{*}(BT)[t_{G}^{-1}] $$ as well. Applying the retraction \ref{retraction}, we get $$r(t_{1}g_{1}+\cdots +t_{m}g_{m}) \in \mathbb{Z}[t_{G}^{-1}][y_{1}, \ldots, y_{i-1}] \subset CH^{*}(BG)[t_{G}^{-1}] .$$ Therefore, $$r(p^{a}y_{i}-(p^{nr_{i}}(t_{1}q_{1}+\cdots +t_{m}q_{m})))=p^{a}y_{i}-p^{nr_{i}}r(t_{1}q_{1}+\cdots +t_{m}q_{m}) \in \mathbb{Z}[t_{G}^{-1}][y_{1}, \ldots, y_{i-1}]$$ which forces $p^{a}=p^{nr_{i}}.$ This proves the first part of the proposition.

\indent Now we prove the second part of the proposition. For simplicity, we assume that the fundamental degrees of $G$ are distinct and take $\tilde{y}=\tilde{y}_{i}.$ Suppose by way of contradiction that there exists such a relation. Consider the complex $$H^{*}(k, \mathbb{Z}(*))[t_{G}^{-1}] \otimes \bigwedge \nolimits^{*}(y_{1}, \ldots , y_{m})$$ (with the obvious differential) that computes \begin{equation} \label{flatbasechangeid}E_{2}^{l,q} \cong \textup{Tor}^{CH^{*}(BG)}_{-l,q,r_{i}}(\mathbb{Z}, H^{*}(k, \mathbb{Z}(*)))[t_{G}^{-1}].\end{equation} We have $$E_{2}^{-1,2r_{i}} \cong E_{\infty}^{-1,2r_{i}} \cong \mathbb{Z}[t_{G}^{-1}] \cdot dy_{i} \cong H^{2r_{i}-1}(G,\mathbb{Z}(r_{i}))[t_{G}^{-1}].$$ Under the identification \ref{flatbasechangeid}, the element $z$ from \ref{definez} gives a generator of $E_{2}^{-1,2r_{i}}.$

\indent Now consider the complex \begin{equation} \label{koszulbg_ntorsionlast}H^{*}(BG_{(n)}, \mathbb{Z}(*))[t_{G}^{-1}] \otimes \bigwedge \nolimits^{*}(y_{1}, \ldots , y_{m})\end{equation} (with the obvious differential $d$) that computes \begin{equation} \label{flatbasechangeidG_n}\overline{E}_{2}^{l,q} \cong \textup{Tor}^{CH^{*}(BG)}_{-l,q,r_{i}}(\mathbb{Z}, H^{*}(BG_{(n)}, \mathbb{Z}(*)))[t_{G}^{-1}].\end{equation} Under the map $f:\overline{E}_{\infty}^{-1,2r_{i}} \to E_{\infty}^{-1,2r_{i}}$ the element $\tilde{z}$ \ref{definetildez} maps to $p^{nr_{i}}$ times a generator of $E_{\infty}^{-1,2r_{i}}.$ It follows that $\tilde{z}$ is a generator of $$\overline{E}_{\infty}^{-1,2r_{i}} \cong H^{2r_{i}-1}(G/G_{(n)},\mathbb{Z}(r_{i}))[t_{G}^{-1}].$$ From the relation \ref{nontrivialrelation}, we get a nonzero $$x=-p^{nr_{1}-1}f_{1}dy_{1}- \cdots -p^{nr_{i-1}-1}f_{i-1}dy_{i-1}+p^{nr_{i}-1}dy_{i}$$ in the complex \ref{flatbasechangeidG_n} with $d(x)=0.$ The element $x$ then gives a nonzero $\tilde{x} \in \overline{E}_{\infty}^{-1,2r_{i}}.$ We have $\tilde{x}=a\tilde{z}$ for some $a \in \mathbb{Z}[t_{G}^{-1}].$ Therefore, $$-p^{nr_{1}-1}f_{1}dy_{1}- \cdots -p^{nr_{i-1}-1}f_{i-1}dy_{i-1}+(p^{nr_{i}-1}-ap^{nr_{i}})dy_{i}$$ is a boundary cycle in the complex \ref{koszulbg_ntorsionlast}, which is impossible since the constant $p^{nr_{i}-1}-ap^{nr_{i}}$ is not equal to $0.$
 \end{proof}

\indent From Proposition \ref{propositiontorsion}, for each $i \leq m,$ there exists $g_{i} \in H^{2r_{i}-1}(BG_{(n)}, \mathbb{Z}/p(r_{i})) $ such that $\delta(g_{i})=p^{nr_{i}-1}\tilde{y}_{i}.$ We can then consider $$\bigwedge \nolimits^{*}(g_{1}, \ldots , g_{m}) \subset H^{*}(BG_{(n)}, \mathbb{Z}/p(*)).$$

\begin{corollary}  \label{bocksteiniso} There is an isomorphism of $CH^{*}(BG)/p$-modules \begin{equation}H^{*}(BG_{(n)}, \mathbb{Z}/p(*)) \cong CH^{*}(BG)/p \otimes \bigwedge\nolimits ^{*}(g_{1}, \ldots , g_{m})\end{equation} where $$\bigwedge\nolimits ^{*}(g_{1}, \ldots , g_{m}) \cong H^{*}(G, \mathbb{Z}/p(*))$$ under the pullback \begin{equation} \label{pullbackbocksteiniso}H^{*}(BG_{(n)}, \mathbb{Z}/p(*)) \to H^{*}(G/G_{(n)}, \mathbb{Z}/p(*)).\end{equation}
\end{corollary}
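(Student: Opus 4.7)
The plan is to derive the corollary from Theorem~\ref{theoremMotcohBG_r} by showing that the Bockstein classes $g_{1},\ldots,g_{m}$ pull back under the map \ref{pullbackbocksteiniso} to a basis of the exterior algebra $H^{*}(G,\mathbb{Z}/p(*))\cong\bigwedge \nolimits ^{*}(x_{1},\ldots,x_{m})$ of Proposition~\ref{corCohG}. Once this is known, the products $\{g_{i_{1}}\cdots g_{i_{k}}\}_{i_{1}<\cdots<i_{k}}$ are lifts (under the ring map \ref{pullbackbocksteiniso}) of the corresponding wedge basis of $H^{*}(G,\mathbb{Z}/p(*))$, so Theorem~\ref{theoremMotcohBG_r} applied with this basis immediately yields the claimed $CH^{*}(BG)/p$-module isomorphism with $CH^{*}(BG)/p\otimes\bigwedge \nolimits ^{*}(g_{1},\ldots,g_{m})$.

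The key claim is that, for each value $r$ occurring among the fundamental degrees, the images $\{\bar{g}_{i}:r_{i}=r\}$ are $\mathbb{F}_{p}$-linearly independent in $H^{2r-1}(G,\mathbb{Z}/p(r))$. I prove this by induction on $r$. For the smallest such value $r_{*}$, a bidegree count inside $CH^{*}(BG)/p\otimes\bigwedge \nolimits ^{*}(x_{1},\ldots,x_{m})$ shows that the submodule $CH^{>0}(BG)/p\cdot H^{*}(BG_{(n)},\mathbb{Z}/p(*))$ vanishes in bidegree $(2r_{*}-1,r_{*})$: any putative contribution $y_{j}\cdot z$ would force $z$ into weight $r_{*}-r_{j}\le 0$ and hence into a vanishing motivic cohomology group by Proposition~\ref{BeiSou}. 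Thus the pullback is injective on this bidegree, and independence of the $\bar{g}_{i}$ reduces to independence of the integral Bocksteins $p^{nr_{*}-1}\tilde{y}_{i}$ in $CH^{r_{*}}(BG_{(n)})[t_{G}^{-1}]$, which follows from Proposition~\ref{propositiontorsion}(1) combined with the mod-$p$ isomorphism of Lemma~\ref{lemmachowBG}.

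For the inductive step, fix $r'$ and assume the claim for all smaller values; by induction the $g_{k}$ with $r_{k}<r'$ already serve as lifts of the $x_{k}$ in the basis of Theorem~\ref{theoremMotcohBG_r}. Suppose there is a non-trivial $\mathbb{F}_{p}$-relation $\sum_{r_{i}=r'}a_{i}\bar{g}_{i}=0$. Then $g:=\sum a_{i}g_{i}$ lies in $CH^{>0}(BG)/p\cdot H^{*}(BG_{(n)},\mathbb{Z}/p(*))$, and a bidegree analysis (in bidegree $(2r'-1,r')$ only a single exterior generator can appear, and a positive-weight coefficient forces $r_{k}<r'$) gives $g=\sum_{r_{k}<r'}f_{k}(y)\cdot g_{k}$ for homogeneous polynomials $f_{k}\in CH^{*}(BG)/p$ of weight $r'-r_{k}$. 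Applying the integral Bockstein $\delta$ with the mixed Leibniz rule $\delta(y_{j}\cdot h)=\tilde{y}_{j}\cdot\delta(h)$ --- valid because $y_{j}=\rho(\tilde{y}_{j})$ lifts integrally and $|\tilde{y}_{j}|=2r_{j}$ is even --- and using the defining relation $\delta(g_{k})=p^{nr_{k}-1}\tilde{y}_{k}$, one obtains
\[
p^{nr'-1}\sum_{r_{i}=r'}a_{i}\tilde{y}_{i} \;=\; \sum_{r_{k}<r'}p^{nr_{k}-1}\,\tilde{f}_{k}(\tilde{y})\,\tilde{y}_{k} \quad\text{in } CH^{*}(BG_{(n)})[t_{G}^{-1}].
\]
The element $\tilde{y}:=\sum a_{i}\tilde{y}_{i}$ has nonzero reduction mod $p$ in $CH^{r'}(BG)/p\cong\mathbb{F}_{p}[y_{1},\ldots,y_{m}]$, since the $y_{i}$ with $r_{i}=r'$ are part of a polynomial generating set, so $\tilde{y}$ has additive order exactly $p^{nr'}$. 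But the display is precisely a relation ruled out by Proposition~\ref{propositiontorsion}(2), giving the required contradiction and closing the induction.

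The main obstacle is the proper handling of the integral Bockstein in this mixed mod-$p$/integral setting --- both establishing the Leibniz formula $\delta(\rho(\tilde{a})\cdot h)=(-1)^{|a|}\tilde{a}\cdot\delta(h)$ (for $\tilde{a}$ integral and $h$ mod $p$) and the bookkeeping that bidegree restrictions genuinely eliminate any cross-contribution from $\tilde{y}_{j}$ with $r_{j}\ge r'$. Both controls funnel the contradiction into the sharp torsion statement of Proposition~\ref{propositiontorsion}(2), which is what ultimately ensures that the Bockstein-defined $g_{i}$ are valid lifts of the exterior generators $x_{i}$.
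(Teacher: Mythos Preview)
Your approach matches the paper's: reduce to showing the $g_i$ pull back to independent elements of $M_1(G)$, induct on the fundamental degree, write a hypothetical dependence in terms of lower-degree $g_k$ using Theorem~\ref{theoremMotcohBG_r}, and apply $\delta$ to obtain a relation in $CH^{*}(BG_{(n)})[t_G^{-1}]$ forbidden by Proposition~\ref{propositiontorsion}. The paper, however, simplifies by assuming the fundamental degrees are distinct, whereas you attempt the general repeated-degree case.

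That attempt has a gap. You claim that $\tilde{y}=\sum_{r_i=r'}a_i\tilde{y}_i$ has additive order exactly $p^{nr'}$ merely because its reduction mod $p$ is nonzero, and in the base case you claim the elements $p^{nr_*-1}\tilde{y}_i$ are $\mathbb{F}_p$-independent from Proposition~\ref{propositiontorsion}(1) together with Lemma~\ref{lemmachowBG}. Neither inference is valid as stated: in a finite abelian $p$-group an element not divisible by $p$ need not have maximal order (take $(0,1)\in\mathbb{Z}/p^{2}\oplus\mathbb{Z}/p$), and knowing the orders of individual generators together with the mod-$p$ quotient does not force their $p^{N-1}$-multiples to be independent (take $(1,0)$ and $(1,1)$ in the same group, both of order $p^{2}$ and independent mod $p$, yet $p\cdot(1,0)=p\cdot(1,1)$). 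Proposition~\ref{propositiontorsion}(1) only computes the order of a single generator $\tilde{y}_i$, and Lemma~\ref{lemmachowBG} only sees the mod-$p$ picture. Without the order hypothesis, Proposition~\ref{propositiontorsion}(2) cannot be invoked and your contradiction does not fire. The statement you actually need --- that any $\mathbb{F}_p$-nontrivial combination $\sum_{r_i=r'}a_i\tilde{y}_i$ has order $p^{nr'}$ --- is provable by rerunning the Eilenberg--Moore argument of Proposition~\ref{propositiontorsion}(1) with $y_i$ replaced by $\sum a_iy_i$, but it is not a formal consequence of the results you cite. (The paper sidesteps this by assuming distinct degrees, so that $\tilde{y}$ is always a single generator and part~(1) applies directly; its own proof of part~(2) is likewise written only in that simplified setting.)
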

\begin{proof} It suffices to show that pullback induces an isomorphism $$\bigwedge \nolimits ^{1}(g_{1}, \ldots , g_{m}) \cong M_{1}(G/G_{(n)}).$$ For simplicity, we assume that the fundamental degrees of $G$ are distinct. We proceed by induction on the fundamental degrees. From Theorem \ref{theoremMotcohBG_r}, recall that $$H^{*}(G/G_{(n)}, \mathbb{Z}/p(*)) \cong \mathbb{Z}/p \otimes_{CH^{*}(BG)/p}H^{*}(BG_{(n)}, \mathbb{Z}/p(*)).$$ For the base case, $g_{1}$ generates $H^{2r_{1}-1}(BG_{(n)}, \mathbb{Z}/p(r_{1}))$ which implies the result. Now assume that $\bigwedge \nolimits ^{1}(g_{1}, \ldots , g_{i-1})$ injects into $M_{1}(G/G_{(n)}).$ Suppose by way of contradiction that $g_{i}$ maps to $0$ under the pullback \ref{pullbackbocksteiniso}. Then $$g_{i}=f_{1}g_{1}+ \cdots +f_{i-1}g_{i-1}$$ for some $f_{1}, \cdots, f_{i-1} \in CH^{*}(BG)/p,$ not all zero. Applying the integral Bockstein $\delta$ to $g_{i},$ we get $$p^{nr_{i}-1}\tilde{y}_{i}=p^{nr_{1}-1}f_{1}\tilde{y}_{1}+ \cdots +p^{nr_{i-1}-1}f_{i-1}\tilde{y}_{i-1} \in CH^{*}(BG_{(n)})[t_{G}^{-1}],$$ contradicting Proposition \ref{propositiontorsion}.
\end{proof}

\begin{remark} Theorem \ref{theoremMotcohBG_r} and Proposition \ref{propositiontorsion} give information about the integral motivic cohomology of Frobenius kernels. For example, consider $G=GL_{r}$ for $r \in \mathbb{N}.$ We have $CH^{*}(BGL_{r}) \cong \mathbb{Z}[c_{1}, \ldots, c_{r}].$ From the Eilenberg-Moore spectral sequence (using integral motivic cohomology since $t_{GL_{r}}=1$), $CH^{*}(BGL_{r})$ maps surjectively onto $CH^{*}(BG_{(n)})$ and $p^{ni}c_{i}=0$ in $CH^{*}(BG_{(n)})$ for all $i.$

\indent As noted above, for each $i \leq r,$ there exists $g_{i} \in H^{2i-1}(BG_{(n)}, \mathbb{Z}/p(i))$ such that $\delta(g_{i})=p^{ni-1}c_{i}.$ Therefore, $$\delta(g_{i}c_{j})=p^{ni-1}c_{i}c_{j}=0 \in H^{*}(BG_{(n)}, \mathbb{Z}(*))$$ whenever $i>j.$ It follows that $g_{i}c_{j}$ lifts to a nonzero element in $H^{*}(BG_{(n)}, \mathbb{Z}(*))$ for all $i>j.$ It doesn't seem as though $H^{*}(BG_{(n)}, \mathbb{Z}(*))$ has a clean description.
\end{remark}
\section{Hodge-Witt sheaves and cohomology}
\indent  Fix a perfect ground field $k$ of characteristic $p>0.$ Let $X/\mathbb{F}_{p}$ be a scheme. For $r \geq 1,$ we consider the de Rham-Witt complex $W_{r} \Omega^{*}$ of $X$ as constructed in \cite{Ill}. Each $W_{r} \Omega^{m}$ is a $W_{r}(\mathcal{O}_{X})$-module.

\indent We now recall the related construction of logarithmic Hodge-Witt sheaves. We refer to \cite[\S 4]{Mor} for some properties of logarithmic Hodge-Witt sheaves that hold for arbitrary $\mathbb{F}_{p}$-schemes. Define $\textup{dlog}: \mathcal{O}^{*}_{X} \to W_{r}\Omega^{1}$ by $$\textup{dlog}(x)=\frac{d \underline{x}}{\underline{x}}$$ where $\underline{x} \coloneqq (x,0, \ldots)$ is the Teichm\"uller representative of $x.$ We then define the logarithmic Hodge-Witt sheaf $$W_{r} \Omega_{\textup{log}}^{m} \subset W_{r} \Omega^{m}$$ to be the subsheaf generated \'etale locally by forms $\textup{dlog}(a_{1}) \wedge \cdots \wedge \textup{dlog}(a_{m}).$ Each $W_{r} \Omega_{\textup{log}}^{m}$ is a sheaf of $\mathbb{Z}/p^{r}$-modules in the \'etale or Zariski topology.

\indent Now assume that $X/k$ is smooth. Let $D(X_{\tau})$ denote the derived category of sheaves in the topology $\tau \in \{\textup{Zar}, \textup{\'et} \}.$ Let $\mathbb{Z}/p^{r}(j) \in D(X_{\tau})$ denote the object representing weight $j$ motivic cohomology with $\mathbb{Z}/p^{r}$-coefficients. (Representatives are given by Bloch's cycle complexes.) Geisser-Levine proved that $$\mathbb{Z}/p^{r}(j) \cong W_{r} \Omega_{\textup{log}}^{j}[-j]$$ in $D(X_{\tau})$ \cite[Theorem 8.3]{GeiLev}. Hence, there are isomorphisms $$H^{i}_{\tau}(X, \mathbb{Z}/p^{r}(j)) \cong H^{i-j}_{\tau}(X, W_{r} \Omega_{\textup{log}}^{j})$$ for all $i,j.$ These isomorphisms are compatible with product structures.

\section{Cycle class map} \label{sectioncycleclass}

\indent Let $X$ be a smooth algebraic stack over a perfect field $k$ and let $F$ be an abelian sheaf on the big \'etale site of $X.$ The cohomology groups of $X$ can be computed using any smooth presentation of $X.$ Let $U \to X$ be a smooth surjective morphism where $U$ is an algebraic space. Consider the \textit{\v{C}ech nerve} $C(U/X):$ 
\[
\begin{tikzcd}
  U \arrow[r, bend right=50, ""] & U \times _{X} U \arrow[l, yshift=.7ex,  ""] \arrow[l, yshift=.-.7ex,  ""]  \arrow[r, bend right=50, yshift=.7ex, ""] \arrow[r, bend right=50, yshift=-.7ex, ""] & U\times_{X} U \times _{X} U \cdots  \arrow[l, yshift=-.8ex,  ""] \arrow[l,   ""] \arrow[l, yshift=.8ex,  ""]
\end{tikzcd}
\]
associated to $U \to X.$ Then $$H^{*}_{\textup{\'et}}(X, F) \cong H^{*}_{\textup{\'et}}(C(U/X), F)$$ \cite[Tags 0DGB, 06XJ]{Sta}. In particular, there is a spectral sequence $$E_{1}^{i,j}=H^{j}_{\textup{\'et}}(U_{X}^{i+1}, F) \rightarrow H^{i+j}_{\textup{\'et}}(X, F).$$ A similar setup was used by Totaro in \cite{Tot} to study the Hodge cohomology of classifying stacks.

\indent Now we specialize to positive characteristic. Let $k$ be a perfect field of characteristic $p>0$ and let $X$ be a smooth stack over $k.$ We consider the abelian sheaf $W_{r}\Omega^{j}_{\textup{log}}$ of logarithmic Hodge-Witt forms over $k$ on the big \'etale site of $X$ and define $$H^{i}_{\textup{\'et}}(X, \mathbb{Z}/p^{r}(j)) \coloneqq H^{i-j}_{\textup{\'et}}(X, W_{r}\Omega^{j}_{\textup{log}}).$$ From the inclusion $\Omega^{j}_{\textup{log}} \subset \Omega^{j},$ we get a map into the Hodge cohomology of $X:$ $$H^{i}_{\textup{\'et}}(X, \mathbb{Z}/p(j)) \to  H^{i-j}_{\textup{\'et}}(X, \Omega^{j}).$$ For $i \geq 0,$ we set $$H^{i}_{\textup{H}}(X/k)\coloneqq\bigoplus_{j=0}^{\infty}H^{i-j}_{\textup{\'et}}(X, \Omega^{j}).$$

\indent Let $G$ be a flat affine group scheme of finite type over $k$ and let $\mathcal{B}G$ denote the classifying stack of $G.$ Note that $\mathcal{B}G$ is smooth over $k$ even if $G$ is not smooth \cite[Tag 0DLS]{Sta}. We now define a cycle map from the mod $p^{r}$ motivic cohomology of the classifying space $BG$ to the mod $p^{r}$ \'etale motivic cohomology of the classifying stack $\mathcal{B}G.$ Let $i,j \geq 0.$ Pick a representation $G \to GL(V)$ where $G$ acts freely on some $U \coloneqq V \setminus S$ where $S$ is a closed $G$-invariant subset of $V$ of codimension $>j,$ and such that $U/G$ exists as a smooth scheme over $k.$ The $G$-torsor $U \to U/G$ induces a morphism $U/G \to \mathcal{B}G$ that is smooth and surjective since $U$ is smooth over $k.$ The associated \v{C}ech nerve is given by \[
\begin{tikzcd}
  U/G \arrow[r, bend right=50, ""] & U^{2}/G \arrow[l, yshift=.7ex,  ""] \arrow[l, yshift=.-.7ex,  ""]  \arrow[r, bend right=50, yshift=.7ex, ""] \arrow[r, bend right=50, yshift=-.7ex, ""] & U^{3}/G \cdots  \arrow[l, yshift=-.8ex,  ""] \arrow[l,   ""] \arrow[l, yshift=.8ex,  ""]
\end{tikzcd}
\] As motivic cohomology is homotopy invariant, the cosimplicial group 
\[
\begin{tikzcd}
  H^{i-j}_{\textup{Zar}}(U/G, W_{r}\Omega_{\textup{log}}^{j})  \arrow[r, yshift=.7ex,  ""] \arrow[r, yshift=.-.7ex,  ""] & H^{i-j}_{\textup{Zar}}(U^{2}/G, W_{r}\Omega_{\textup{log}}^{j})  \arrow[r, yshift=-.8ex,  ""] \arrow[l, bend right=-50, ""] \arrow[r,   ""] \arrow[r, yshift=.8ex,  ""]  & H^{i-j}_{\textup{Zar}}(U^{3}/G, W_{r}\Omega_{\textup{log}}^{j}) \cdots \arrow[l, bend right=-50, yshift=.7ex, ""] \arrow[l, bend right=-50, yshift=-.7ex, ""]
\end{tikzcd}
\]
is constant (meaning that the coface maps are all isomorphisms) by \cite[Lemma A.4]{BliMer}. Then $$H^{i-j}_{\textup{Zar}}(C((U/G)/\mathcal{B}G), W_{r}\Omega_{\textup{log}}^{j})\cong H^{i-j}_{\textup{Zar}}(U/G, W_{r}\Omega_{\textup{log}}^{j}).$$ The canonical map of sites $C((U/G)/\mathcal{B}G)_{\textup{\'et}} \to C((U/G)/\mathcal{B}G)_{\textup{Zar}}$ gives a map of topoi $$(\epsilon^{*}, \epsilon_{*}): Sh(C((U/G)/\mathcal{B}G)_{\textup{Zar}}) \to Sh(C((U/G)/\mathcal{B}G)_{\textup{\'et}})$$ where $\epsilon^{*}$ is also exact. (This is induced by the usual map of topoi $$(\epsilon^{*}, \epsilon_{*}): Sh((U^{m}/G)_{\textup{Zar}}) \to Sh((U^{m}/G)_{\textup{\'et}})$$ for each $m \geq 0$ where $\epsilon^{*}$ is \'etale sheafification and $\epsilon_{*}$ is restriction \cite[Tag 0D95, Tag 0D96]{Sta}. See \cite[Tag 09VK]{Sta} for the definition of the Zariski site of a simplicial scheme. The \'etale site is defined similarly.) We then get a comparison map
\begin{equation} \label{comparisonmap}
H^{i}(BG, \mathbb{Z}/p^{r}(j)) \cong H^{i-j}_{\textup{Zar}}(U/G, W_{r}\Omega_{\textup{log}}^{j}) \cong H^{i-j}_{\textup{Zar}}(C((U/G)/\mathcal{B}G), W_{r}\Omega_{\textup{log}}^{j})\end{equation}

$$ \to H^{i-j}_{\textup{\'et}}(C((U/G)/\mathcal{B}G), W_{r}\Omega_{\textup{log}}^{j}) = H^{i}_{\textup{\'et}}(\mathcal{B}G, \mathbb{Z}/p^{r}(j)).$$
\begin{proposition} The comparison map \ref{comparisonmap} is well-defined.
\end{proposition}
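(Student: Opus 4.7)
The statement asserts independence of the map (\ref{comparisonmap}) from the auxiliary choice of representation $V$ and open $U = V \setminus S$. Note first that the target $H^{i}_{\textup{\'et}}(\mathcal{B}G, \mathbb{Z}/p^{r}(j))$ is intrinsic to the stack $\mathcal{B}G$ by \'etale descent, so the question reduces to showing that for two choices $(V_{1}, U_{1}, S_{1})$ and $(V_{2}, U_{2}, S_{2})$, the resulting maps $\phi_{a}: H^{i}(BG, \mathbb{Z}/p^{r}(j)) \to H^{i}_{\textup{\'et}}(\mathcal{B}G, \mathbb{Z}/p^{r}(j))$, $a=1,2$, coincide. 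The plan is the standard double-fibration argument: compare both $\phi_{a}$ to the map $\phi_{12}$ built from the common refinement $V_{12} = V_{1} \oplus V_{2}$, $U_{12} = U_{1} \times U_{2}$, whose complement $(S_{1} \times V_{2}) \cup (V_{1} \times S_{2})$ in $V_{12}$ has codimension $\min(\textup{codim}(S_{1}), \textup{codim}(S_{2})) > j$, and on which $G$ acts freely.

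The equivariant projections $U_{12} \to U_{a}$ descend to morphisms $\bar\pi_{a}: U_{12}/G \to U_{a}/G$ of smooth presentations of $\mathcal{B}G$. The key local claim is that each $\bar\pi_{a}^{*}$ is an isomorphism on $H^{i-j}_{\textup{Zar}}(-, W_{r}\Omega^{j}_{\textup{log}})$: factoring $\bar\pi_{1}$ as $(U_{1} \times U_{2})/G \hookrightarrow (U_{1} \times V_{2})/G \to U_{1}/G$, the first arrow is an open immersion whose closed complement has codimension $>j$ (so by the Gysin/localization sequence it is an isomorphism on weight-$j$ motivic cohomology, exactly as in Totaro's original construction of $BG$), while the second is a vector bundle (so $\mathbb{A}^{1}$-invariance of motivic cohomology applies).

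With the claim in hand, I would assemble the naturality diagram whose two rows are the three-step chain (\ref{comparisonmap}) built from $U_{a}/G$ and from $U_{12}/G$, linked by vertical maps induced by $\bar\pi_{a}$. The leftmost vertical is $\bar\pi_{a}^{*}$, an isomorphism by the claim; the middle vertical is an isomorphism because both sides are identified with the Zariski cohomologies of $U_{a}/G$ and $U_{12}/G$ via the constant-cosimplicial-group argument of \cite[Lemma A.4]{BliMer}, compatibly with $\bar\pi_{a}^{*}$; the rightmost vertical is an isomorphism by \'etale descent applied to the refinement of \v{C}ech nerves $C((U_{12}/G)/\mathcal{B}G) \to C((U_{a}/G)/\mathcal{B}G)$. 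Commutativity then yields $\phi_{a} = \phi_{12}$ for $a = 1, 2$, hence $\phi_{1} = \phi_{2}$. The main obstacle is verifying commutativity of the square involving the Zariski-to-\'etale comparison $\epsilon^{*}$, i.e. its naturality with respect to the morphism of simplicial schemes induced by $\bar\pi_{a}$; this is routine once one checks that the map of topoi $(\epsilon^{*}, \epsilon_{*})$ is functorial in the simplicial scheme underlying the \v{C}ech nerve, but it is the step that requires the most care in bookkeeping.
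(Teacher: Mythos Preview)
Your proposal is correct and follows essentially the same double-fibration strategy as the paper. The only organizational difference is that the paper splits the argument into two steps—first fixing $V$ and varying the closed subset $S \subset S'$, then varying $V$ via the intermediaries $(U \times V')/G$ and $(V \times U')/G$—whereas you pass directly to the common refinement $U_{1}\times U_{2}$ and handle both steps at once by factoring $\bar\pi_{a}$ as an open immersion of large codimension followed by a vector bundle; your explicit attention to the functoriality of the Zariski-to-\'etale comparison in the simplicial scheme is a point the paper leaves implicit.
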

\begin{proof}
Essentially, we just use that the motivic classifying space $BG$ is well-defined and that the \'etale motivic cohomology of the classifying stack $\mathcal{B}G$ can be computed using any smooth presentation of $\mathcal{B}G.$ Let $S' \subset V$ be a closed $G$-invariant subset containing $S,$ and of codimension $>j.$ Set $U'=V \setminus S'.$ Then $U'/G \to \mathcal{B}G$ is a smooth presentation and the map of simplicial schemes $C((U'/G)/\mathcal{B}G)\to C((U/G)/\mathcal{B}G)$ induces isomorphisms $$H^{i-j}_{\textup{\'et}}(C((U/G)/\mathcal{B}G), W_{r}\Omega_{\textup{log}}^{j}) \cong H^{i-j}_{\textup{\'et}}(\mathcal{B}G, W_{r}\Omega_{\textup{log}}^{j}) \cong H^{i-j}_{\textup{\'et}}(C((U'/G)/\mathcal{B}G), W_{r}\Omega_{\textup{log}}^{j}).$$ As $$H^{i}(U/G, \mathbb{Z}/p^{r}(j)) \cong H^{i}(U'/G, \mathbb{Z}/p^{r}(j)),$$ this shows that the comparison map does not depend on the choice of $S \subset V.$

\indent Next, we show that the comparison map does not depend on the choice of $V.$ Let $G \to GL(V')$ be a representation, and let $S' \subset V'$ be a $G$-invariant closed subset of codimension $>j.$ We have $$H^{i-j}_{\textup{\'et}}(C(((U\times V')/G)/\mathcal{B}G), W_{r}\Omega_{\textup{log}}^{j}) \cong H^{i-j}_{\textup{\'et}}(\mathcal{B}G, W_{r}\Omega_{\textup{log}}^{j}) \cong H^{i-j}_{\textup{\'et}}(C((U/G)/\mathcal{B}G), W_{r}\Omega_{\textup{log}}^{j})$$ and 
$$H^{i-j}_{\textup{\'et}}(C(((V\times U')/G)/\mathcal{B}G), W_{r}\Omega_{\textup{log}}^{j}) \cong H^{i-j}_{\textup{\'et}}(\mathcal{B}G, W_{r}\Omega_{\textup{log}}^{j}) \cong H^{i-j}_{\textup{\'et}}(C((U'/G)/\mathcal{B}G), W_{r}\Omega_{\textup{log}}^{j}).$$ There are also isomorphisms on the corresponding motivic cohomology groups. As we showed in the previous paragraph that the comparison map doesn't depend on the choice of $G$-invariant closed subset in the representation $V \oplus V',$ this shows that the comparison map doesn't depend on the choice of representation $V.$
\end{proof}

\indent We note that our cycle class maps are compatible with product structures. Now we look at some examples. The following result was proved by Srinivas \cite[section 3]{Sri}.

\begin{proposition} \label{propcyclemap} Let $G$ be a split reductive field over a field $F$ and let $P \subset G$ be a parabolic subgroup. Then the cycle class map $$CH^{*}(G/P) \otimes F \to H^{*}_{\textup{H}}((G/P)/k)$$ is an isomorphism.
\end{proposition}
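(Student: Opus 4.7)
The plan is to use the Bruhat decomposition of $G/P$ into Schubert cells. Fixing a Borel subgroup $B \subset P$, the decomposition $G/P = \bigsqcup_{w} BwP/P$, indexed by minimal length coset representatives of $W/W_P$, expresses $G/P$ as a smooth projective variety with an affine paving by cells $C_w \cong \mathbb{A}^{\ell(w)}_F$. This cellular structure drives both computations.

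First I would invoke the standard consequence of an affine paving on the Chow side: by induction on the cell filtration using the localization sequence for higher Chow groups together with the vanishing of $CH^*(\mathbb{A}^n, m)$ for $m>0$, one concludes that $CH^*(G/P)$ is a free abelian group on the Schubert classes $[\overline{C_w}] \in CH^{\ell(w)}(G/P)$. Second, I would carry out the analogous computation on the Hodge side. The Bia{\l}ynicki-Birula decomposition associated to a generic cocharacter of a maximal torus $T \subset B$ yields a filtration of $G/P$ whose successive strata are precisely the Schubert cells $C_w$. Combined with the affine-bundle structure of each cell, this forces $H^i(G/P, \Omega^j) = 0$ for $i \neq j$, and shows that $\dim_F H^p(G/P, \Omega^p)$ equals the number of Schubert cells of codimension $p$. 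Hence $H^*_{\textup{H}}((G/P)/F)$ is a free $F$-module of the same graded rank as $CH^*(G/P) \otimes F$.

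Third, I would match the two gradeds via the cycle class map on the Schubert basis. The cycle class map sends $[\overline{C_w}]$ to a class in $H^{\ell(w)}(G/P, \Omega^{\ell(w)})$. To see that this map is an isomorphism, the cleanest route, which is essentially Srinivas's method, is to pair with the opposite Schubert varieties $\overline{C'_{w_0 w}}$ of complementary dimension: the intersection numbers $[\overline{C_w}] \cdot [\overline{C'_{w_0 w}}] = 1$ in $CH^*(G/P)$ are preserved under the cycle class map because cycle class maps commute with proper pushforward and the top Hodge cohomology $H^{\dim G/P}(G/P, \Omega^{\dim G/P})$ is one-dimensional. Thus the Schubert classes map to a basis dual to the opposite Schubert basis of $H^*_{\textup{H}}((G/P)/F)$, giving the desired isomorphism after extending scalars.

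The main obstacle, given the cellular structure of both sides, is verifying that the cycle class map constructed in section \ref{sectioncycleclass} agrees with the classical cycle class map $CH^* \to \bigoplus H^p(-,\Omega^p)$ used by Srinivas, so that the intersection-pairing argument can be imported verbatim. Once this compatibility is established, the proposition reduces to Srinivas's calculation as cited.
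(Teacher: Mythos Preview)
The paper does not supply its own proof of this proposition: it simply attributes the result to Srinivas \cite[section 3]{Sri} and moves on. Your sketch via the Bruhat decomposition, Hodge vanishing on a cellular variety, and the Schubert/opposite-Schubert duality pairing is a faithful outline of Srinivas's argument, so in that sense you have reproduced what the paper is citing rather than diverged from it.

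One point worth flagging: in the paper's logic the cycle class map under discussion for the smooth projective scheme $G/P$ is the classical one (into $\bigoplus_p H^p(-,\Omega^p)$), not the comparison map \ref{comparisonmap} of section \ref{sectioncycleclass}, which is only defined for classifying spaces $BG$. So the compatibility check you identify as the ``main obstacle'' is not actually needed for this proposition; it is only relevant later, when the paper pulls back along $BT \to BG$ and uses Proposition \ref{propcyclemap} for $\mathbb{P}^n$ inside the proof of Corollary \ref{cortorus}.
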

\indent Fix a perfect field $k$ of characteristic $p>0.$
\begin{corollary} \label{cortorus} Let $T$ be a split torus over $k.$ Then the cycle class map defined above induces an isomorphism $$H^{*}(BT, \mathbb{Z}/p(*)) \otimes k \cong H^{*}_{\textup{H}}(\mathcal{B}T/k).$$
\end{corollary}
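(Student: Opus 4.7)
The plan is to reduce via a K\"unneth argument to the rank-one case $T = \mathbb{G}_m$ and then apply Proposition \ref{propcyclemap} to the standard Totaro approximation $\mathbb{P}^N$ of $B\mathbb{G}_m$.

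For the reduction, write $T \cong \mathbb{G}_m^n$, so that $BT \simeq (B\mathbb{G}_m)^n$ and $\mathcal{B}T \simeq (\mathcal{B}\mathbb{G}_m)^n$. Motivic cohomology of $BT$ satisfies a K\"unneth formula over $H^*(k, \mathbb{Z}/p(*))$ since the motive of $B\mathbb{G}_m$ is mixed Tate (as in the proof of Proposition \ref{corCohG} via \cite[Proposition 4.2]{Big}), and Hodge cohomology of $\mathcal{B}T$ splits correspondingly by applying the coherent K\"unneth formula levelwise to the \v{C}ech nerves computing $H^*_{\textup{H}}(\mathcal{B}T/k)$. Since the cycle class map of section \ref{sectioncycleclass} is natural under products of groups, it suffices to treat $T = \mathbb{G}_m$.

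For $T = \mathbb{G}_m$, take $U = \mathbb{A}^{N+1} \setminus \{0\}$ with the scaling $\mathbb{G}_m$-action, giving $U/\mathbb{G}_m \cong \mathbb{P}^N \cong GL_{N+1}/P$ with $P$ the maximal parabolic stabilizing a line. Proposition \ref{propcyclemap} then yields $CH^*(\mathbb{P}^N) \otimes k \cong H^*_{\textup{H}}(\mathbb{P}^N/k) = k[c_1]/(c_1^{N+1})$. By Totaro's construction, $H^i(B\mathbb{G}_m, \mathbb{Z}/p(j)) \cong H^i(\mathbb{P}^N, \mathbb{Z}/p(j))$ for $j \leq N$, and the projective bundle formula identifies $H^*(B\mathbb{G}_m, \mathbb{Z}/p(*))$ with the polynomial ring over $H^*(k, \mathbb{Z}/p(*))$ generated by the first Chern class $v \in H^2(B\mathbb{G}_m, \mathbb{Z}/p(1))$ of the universal line bundle; by construction, the cycle class map sends $v$ to $c_1 \in H^1_{\textup{\'et}}(\mathcal{B}\mathbb{G}_m, \Omega^1)$. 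Tensoring over $H^*(k, \mathbb{Z}/p(*))$ with $k$ via the augmentation $H^*(k, \mathbb{Z}/p(*)) \to \mathbb{F}_p \to k$ (so that the Milnor K-theory classes in the source, which already die in the relative differentials $\Omega^j_{X/k}$, are collapsed on both sides consistently) and passing to the limit $N \to \infty$ produces the desired isomorphism.

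The main obstacle is matching the Hodge cohomology of the stack $\mathcal{B}\mathbb{G}_m$, defined via the \v{C}ech nerve $C((U/\mathbb{G}_m)/\mathcal{B}\mathbb{G}_m)$, with the Hodge cohomology of the finite-dimensional approximation $\mathbb{P}^N$. Hodge cohomology is not $\mathbb{A}^1$-invariant, so the cosimplicial constancy of \cite[Lemma A.4]{BliMer} used for $\Omega^j_{\textup{log}}$ does not apply directly to $\Omega^j$. One must analyze the associated \v{C}ech spectral sequence for $\Omega^j$ on the nerve levels $U^{n+1}/\mathbb{G}_m$, using the iterated $\mathbb{G}_m$-torsor structure of the projections $U^{n+1}/\mathbb{G}_m \to \mathbb{P}^N$ to collapse the computation to a Totaro-style determination of $H^*_{\textup{H}}(\mathcal{B}\mathbb{G}_m/k) \cong k[c_1]$, compatibly with the Chern-class generator on the motivic side.
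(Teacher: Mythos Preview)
Your overall strategy matches the paper's proof exactly: reduce to $T=\mathbb{G}_m$ by K\"unneth, then use Proposition~\ref{propcyclemap} on the approximations $\mathbb{P}^N$. The difference is that what you flag as the ``main obstacle''---identifying $H^{*}_{\textup{H}}(\mathcal{B}\mathbb{G}_m/k)$---is not something the paper rederives. The paper simply cites Totaro's computation \cite{Tot} that $H^{*}_{\textup{H}}(\mathcal{B}\mathbb{G}_m/k)\cong k[c_1]$ with $c_1\in H^{1}_{\textup{\'et}}(\mathcal{B}\mathbb{G}_m,\Omega^1)$, and likewise cites \cite[Proposition~5.1]{Tot} for the K\"unneth splitting of $H^{*}_{\textup{H}}(\mathcal{B}T/k)$. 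With those two inputs in hand, Proposition~\ref{propcyclemap} on $\mathbb{P}^N$ shows the cycle class map hits the generator $c_1$, and the rest is formal. Your last paragraph is sketching a reproof of Totaro's result rather than invoking it; as written it is an unfinished promise (``one must analyze\ldots''), so the argument has a gap that is closed by the citation.

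One small correction: your discussion of ``Milnor $K$-theory classes in the source'' and tensoring over $H^{*}(k,\mathbb{Z}/p(*))$ via an augmentation is unnecessary here. Since $k$ is perfect of characteristic $p$, Remark~\ref{remarkField} gives $H^{*}(k,\mathbb{Z}/p(*))\cong\mathbb{F}_p$, so $H^{*}(B\mathbb{G}_m,\mathbb{Z}/p(*))\cong\mathbb{F}_p[v]$ already, and $\otimes\, k$ is just extension of scalars from $\mathbb{F}_p$ to $k$.
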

\begin{proof} From \cite{Tot}, we have $$H^{*}_{\textup{H}}(\mathcal{B}\mathbb{G}_{m}/k) \cong k[c_{1}]$$ where $c_{1} \in H^{1}_{\textup{\'et}}(\mathcal{B}\mathbb{G}_{m}, \Omega^{1}).$ From Proposition \ref{propcyclemap}, $$CH^{*}(\mathbb{P}^{n}) \otimes k \cong H^{*}_{\textup{H}}(\mathbb{P}^{n}/k)$$ for all $n.$ It follows that the cycle class map induces an isomorphism $$H^{*}(B\mathbb{G}_{m}, \mathbb{Z}/p(*)) \otimes k \cong H^{*}_{\textup{H}}(\mathcal{B}\mathbb{G}_{m}/k).$$ The K\"unneth formula for the motivic cohomology of $BT$ and the Hodge cohomology of $\mathcal{B}T$ \cite[Proposition 5.1]{Tot} then give the general result.
\end{proof}

\begin{corollary}  Let $G$ be a split reductive group over $k$ with $\textup{char}(k)=p$ a non-torsion prime for $G.$ Then the cycle class map induces an isomorphism $$CH^{*}(BG) \otimes k \cong H^{*}_{\textup{H}}(\mathcal{B}G/k).$$
\end{corollary}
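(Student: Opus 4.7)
The plan is to reduce the statement to the torus case (Corollary \ref{cortorus}) by pulling back along the inclusion of a split maximal torus $T \subset G$ with Weyl group $W = N_G(T)/T$. I assemble the commutative square of cycle class maps
\[
\begin{tikzcd}
CH^{*}(BG) \otimes k \arrow[r] \arrow[d] & H^{*}_{\textup{H}}(\mathcal{B}G/k) \arrow[d] \\
CH^{*}(BT) \otimes k \arrow[r, "\cong"] & H^{*}_{\textup{H}}(\mathcal{B}T/k)
\end{tikzcd}
\]
whose bottom row is an isomorphism. Both vertical pullbacks are $W$-equivariant for the trivial $W$-action on the top row, so their images land in the $W$-invariants on the bottom row. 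It will therefore suffice to show that each vertical map is an isomorphism onto these $W$-invariants.

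The left vertical map is the easy one. Since $p$ is a non-torsion prime for $G$, $t_G$ is a unit in $k$. The isomorphism \ref{ChowTorsion} is defined over $\mathbb{Z}[t_G^{-1}]$, and tensoring with $k$ (a $\mathbb{Z}[t_G^{-1}]$-algebra) -- using that invariants under a finite group action commute with flat base change -- yields $CH^*(BG) \otimes k \xrightarrow{\sim} (CH^*(BT) \otimes k)^W$.

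The main obstacle is the corresponding statement on the Hodge side, namely $H^*_{\textup{H}}(\mathcal{B}G/k) \xrightarrow{\sim} H^*_{\textup{H}}(\mathcal{B}T/k)^W$. My plan is to factor $\mathcal{B}T \to \mathcal{B}G$ through $\mathcal{B}B$ for a Borel $B \supset T$. The first map $\mathcal{B}T \to \mathcal{B}B$ is an iterated affine-space bundle (as $B/T$ is unipotent), hence induces an isomorphism on Hodge cohomology by homotopy invariance. For the second map, after passing to suitable smooth approximations one obtains a genuine $G/B$-bundle of smooth schemes $U/B \to U/G$, to which one applies a Leray spectral sequence for Hodge cohomology (the Hodge analogue of Theorem \ref{EilenbergMoore}). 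Proposition \ref{propcyclemap} identifies the fiber cohomology $H^*_{\textup{H}}((G/B)/k)$ with $CH^*(G/B) \otimes k$, which is free of rank $|W|$ over $CH^*(BG) \otimes k$ (after inverting $t_G$, which is already a unit in $k$); the spectral sequence therefore degenerates at $E_2$ and yields $H^*_{\textup{H}}(\mathcal{B}T/k) \cong H^*_{\textup{H}}(\mathcal{B}G/k) \otimes_{CH^*(BG) \otimes k} (CH^*(BT) \otimes k)$. Taking $W$-invariants and feeding in the Chow-side identification from the previous paragraph then produces the desired isomorphism. Alternatively, one may directly quote Totaro's computation of the Hodge cohomology of classifying stacks of split reductive groups in \cite{Tot}. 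Combining the two vertical isomorphisms with the bottom horizontal isomorphism forces the top cycle class map to be an isomorphism as well.
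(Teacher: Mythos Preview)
Your proposal is correct, and your ``alternative'' of quoting Totaro's computation in \cite{Tot} is in fact exactly what the paper does. However, the paper organizes the argument a bit more economically than your primary route through $W$-invariants.

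The paper does not attempt to identify either vertical restriction map with an isomorphism onto the $W$-invariants. Instead it uses only \emph{injectivity} of both vertical maps: the left one because $p$ is a non-torsion prime, and the right one by quoting \cite[Lemma 8.2]{Tot}, which identifies the restriction $H^{*}_{\textup{H}}(\mathcal{B}G/k)\to H^{*}_{\textup{H}}(\mathcal{B}T/k)$ with the injective map $\mathcal{O}(\mathfrak{g})^{G}\hookrightarrow \mathcal{O}(\mathfrak{t})$. Since the bottom row is an isomorphism by Corollary~\ref{cortorus}, the top cycle class map is injective. For surjectivity, the paper quotes \cite[Theorems 9.1, 9.2]{Tot}: $H^{*}_{\textup{H}}(\mathcal{B}G/k)$ is a polynomial ring on generators in exactly the same degrees as $CH^{*}(BG)\otimes k$, so a degree-by-degree dimension count finishes the proof. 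This avoids any need to match the images with $W$-invariants, and in particular sidesteps your Leray--Hirsch step. That step is plausible but, as written, skirts a circularity: you invoke freeness of $H^{*}_{\textup{H}}((G/B)/k)$ over $CH^{*}(BG)\otimes k$ and then tensor over $CH^{*}(BG)\otimes k$, which presupposes that the $H^{*}_{\textup{H}}(\mathcal{B}G/k)$-module structure on the fiber factors through the cycle class map you are trying to show is an isomorphism. Your $W$-invariants approach can certainly be made to work, but the paper's injectivity-plus-dimension-count is shorter and leans on the same reference you already cite.
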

\begin{proof} 
As $p$ is a non-torsion prime, the restriction map $$CH^{*}(BG)/p \to CH^{*}(BT)/p$$ is an injection where $T \subset G$ is a split torus. We have $$H^{*}_{\textup{H}}(\mathcal{B}G/k) \cong \bigoplus _{i=0}^{\infty} H^{i}_{\textup{\'et}}(\mathcal{B}G, \Omega^{i}) \cong \mathcal{O}(\frak{g})^{G}$$ and $H^{*}_{\textup{H}}(\mathcal{B}G/k)$ is a polynomial ring with generators of degree equal to twice the fundamental degrees of $G$  \cite[Theorem 9.1, Theorem 9.2]{Tot}.] As noted in the proof of \cite[Lemma 8.2]{Tot}, the restriction map $$H^{*}_{\textup{H}}(\mathcal{B}G/k) \cong \mathcal{O}(\frak{g})^{G} \to \mathcal{O}(\frak{t}) \cong H^{*}_{\textup{H}}(\mathcal{B}T/k)$$ is injective. The result then follows from Corollary \ref{cortorus}.
\end{proof}

\indent We next look at the cycle map for Frobenius kernels and Chow groups.

\begin{proposition} \label{comparisonmup^n} Let $n \in \mathbb{N}.$The cycle class map induces an injection $$CH^{*}(B\mu_{p^{n}})/p  \to H^{*}_{\textup{\'et}}(\mathcal{B}\mu_{p^{n}}, \mathbb{Z}/p(*)).$$
\end{proposition}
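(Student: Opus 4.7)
The plan is to factor the cycle class map through Hodge cohomology, reduce the question to an injectivity check on a smooth approximation, and exploit the vanishing of $p^{n}H$ in characteristic $p$. By Proposition \ref{CohBmu}, $CH^{*}(B\mu_{p^{n}})/p \cong \mathbb{F}_{p}[v]$, where $v$ is the pullback of the canonical generator $c_{1} \in CH^{1}(B\mathbb{G}_{m})$ along $\mu_{p^{n}} \hookrightarrow \mathbb{G}_{m}$. Composing the cycle class map with the natural map $H^{*}_{\textup{\'et}}(\mathcal{B}\mu_{p^{n}}, \mathbb{Z}/p(*)) \otimes_{\mathbb{F}_{p}} k \to H^{*}_{\textup{H}}(\mathcal{B}\mu_{p^{n}}/k)$, the proposition reduces to showing that
\[
CH^{*}(B\mu_{p^{n}})/p \otimes k \longrightarrow H^{*}_{\textup{H}}(\mathcal{B}\mu_{p^{n}}/k)
\]
is injective, since extension of scalars from $\mathbb{F}_{p}$ to $k$ is faithful on $\mathbb{F}_{p}$-vector spaces.

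By naturality of the cycle class map with respect to $\mathcal{B}\mu_{p^{n}} \to \mathcal{B}\mathbb{G}_{m}$ and Corollary \ref{cortorus}, we obtain a commutative square
\[
\begin{tikzcd}
CH^{*}(B\mathbb{G}_{m}) \otimes k \arrow[r, "\sim"] \arrow[d] & H^{*}_{\textup{H}}(\mathcal{B}\mathbb{G}_{m}/k) \arrow[d] \\
CH^{*}(B\mu_{p^{n}})/p \otimes k \arrow[r] & H^{*}_{\textup{H}}(\mathcal{B}\mu_{p^{n}}/k)
\end{tikzcd}
\]
in which the top horizontal arrow is an isomorphism and the left vertical arrow is the surjection $c_{1}^{i} \mapsto v^{i}$. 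It thus suffices to show that the right vertical pullback is injective on $k[c_{1}] \cong H^{*}_{\textup{H}}(\mathcal{B}\mathbb{G}_{m}/k)$.

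For this I would pass to the smooth approximations $U_{m} = \mathbb{A}^{m} \setminus \{0\}$: we have $U_{m}/\mathbb{G}_{m} = \mathbb{P}^{m-1}$, while $U_{m}/\mu_{p^{n}}$ is identified with $L \setminus 0$ for the line bundle $L = \mathcal{O}_{\mathbb{P}^{m-1}}(p^{n})$ (up to sign). The map $L \setminus 0 \to \mathbb{P}^{m-1}$ is the $\mathbb{G}_{m}$-torsor associated to $L$, and its first Chern class $c_{1}(L) = p^{n}H$ vanishes in $H^{1}(\mathbb{P}^{m-1}, \Omega^{1})$ since the latter is a $k$-vector space and $\textup{char}(k) = p$. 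The localization sequence for the zero section $\mathbb{P}^{m-1} \hookrightarrow L$ therefore breaks into short exact sequences
\[
0 \to H^{i}(\mathbb{P}^{m-1}, \Omega^{j}) \to H^{i}(L \setminus 0, \Omega^{j}) \to H^{i}(\mathbb{P}^{m-1}, \Omega^{j-1}) \to 0,
\]
so the pullback $H^{*}_{\textup{H}}(\mathbb{P}^{m-1}/k) \to H^{*}_{\textup{H}}((L \setminus 0)/k)$ is injective. Taking $m$ large enough for any fixed total degree gives the desired injectivity.

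The main obstacle will be verifying that the Hodge cohomology of $\mathcal{B}\mu_{p^{n}}$ is computed by that of its approximations $U_{m}/\mu_{p^{n}}$ in the relevant range, and stating the localization/Gysin sequence cleanly in Hodge cohomology in positive characteristic; these are essentially bookkeeping given the construction of the cycle class map via the \v{C}ech nerve earlier in the paper. The crucial input is then the elementary observation that multiplication by $p^{n}H$ on a $k$-vector space is zero.
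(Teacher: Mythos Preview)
Your argument is correct and its overall shape matches the paper's: both reduce to showing that the composite into Hodge cohomology is injective, and both use naturality along $\mathcal{B}\mu_{p^{n}}\to\mathcal{B}\mathbb{G}_{m}$ together with Corollary~\ref{cortorus}. The only genuine difference is at the last step. The paper simply quotes the computation $H^{*}_{\textup{H}}(\mathcal{B}\mu_{p^{n}}/k)\cong k[c_{1}]\langle w\rangle$ (adapting Totaro's calculation for $\mu_{p}$), from which injectivity of $k[c_{1}]\to H^{*}_{\textup{H}}(\mathcal{B}\mu_{p^{n}}/k)$ is immediate. You instead bypass that stack computation by restricting further to the scheme $U_{m}/\mu_{p^{n}}\cong L\setminus 0$ and using the Gysin sequence for the $\mathbb{G}_{m}$-torsor $L\setminus 0\to\mathbb{P}^{m-1}$, whose connecting map is cup product with $c_{1}(L)=p^{n}H=0$. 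That is a perfectly good, slightly more elementary substitute.

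One clarification about the ``obstacle'' you flag: you do not need to know that $H^{*}_{\textup{H}}(\mathcal{B}\mu_{p^{n}}/k)$ agrees with $H^{*}_{\textup{H}}((U_{m}/\mu_{p^{n}})/k)$ in a range (and indeed Hodge cohomology is not $\mathbb{A}^{1}$-invariant, so this would require its own argument). What you actually use is only that the composite
\[
CH^{j}(B\mu_{p^{n}})/p\otimes k \;\longrightarrow\; H^{*}_{\textup{H}}(\mathcal{B}\mu_{p^{n}}/k)\;\longrightarrow\; H^{*}_{\textup{H}}((U_{m}/\mu_{p^{n}})/k)
\]
coincides with the ordinary cycle class map on the smooth scheme $U_{m}/\mu_{p^{n}}$; this is immediate from the construction of the cycle map via the \v{C}ech nerve. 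Your Gysin argument then shows that this scheme-level cycle map is injective in the relevant range (since $CH^{j}(\mathbb{P}^{m-1})/p\to CH^{j}(L\setminus 0)/p$ is an isomorphism for $j<m$, the top arrow is an isomorphism by Proposition~\ref{propcyclemap}, and $H^{j}(\mathbb{P}^{m-1},\Omega^{j})\to H^{j}(L\setminus 0,\Omega^{j})$ is injective). So the flagged obstacle disappears once phrased this way.
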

\begin{proof} Take $$H^{*}(B\mu_{p^{n}}, \mathbb{Z}/p(*)) \cong \mathbb{F}_{p}[u,v]/(u^{2})$$ as in Proposition \ref{CohBmu}. A calculation similar to the one performed in the proof of \cite[Proposition 10.1]{Tot} shows that $$H^{*}_{\textup{H}}(\mathcal{B}\mu_{p^{n}}/k) \cong k[c_{1}]\langle w\rangle$$ (the free exterior algebra over $k[c_{1}]$ with generator $w$) where $c_{1} \in H^{1}_{\textup{\'et}}(\mathcal{B}\mu_{p^{n}}, \Omega^{1})$ is the pullback of a generator of $H^{*}_{\textup{H}}(\mathcal{B}\mathbb{G}_{m}/k),$ and $w \in H^{0}_{\textup{\'et}}(\mathcal{B}\mu_{p^{n}}, \Omega^{1})$ with $w^{2}=0.$ From Corollary \ref{cortorus}, it follows that $CH^{*}(B\mu_{p^{n}})/p$ injects into $H^{2*}_{\textup{\'et}}(\mathcal{B}\mu_{p^{n}}, \mathbb{Z}/p(*))$ under the cycle class map.
\end{proof}

\begin{corollary} \label{chowcomparison}
Let $G$ be a split reductive group over a perfect field $k$ with $p=\textup{char}(k)$ a non-torsion prime for $G.$ Let $n \in \mathbb{N}.$ Then the cycle map induces an injection $$CH^{*}(BG_{(n)})/p \to H^{*}_{\textup{\'et}}(\mathcal{B}G_{(n)}, \mathbb{Z}/p(*)).$$
\end{corollary}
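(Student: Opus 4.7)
The plan is to reduce, by naturality of the cycle class map along a split maximal torus $T \subset G$, to the case $T_{(n)} \cong \mu_{p^n}^{\textup{rank}(G)}$, where Proposition \ref{comparisonmup^n} combined with a K\"unneth argument finishes the proof. Fix such a torus $T \subset G$ and choose a $G$-representation $V$ together with a closed $G$-invariant $S \subset V$ of codimension large enough (relative to the weights being considered) so as to compute the cycle class map as in \ref{comparisonmap}. Since $G_{(n)}$ acts freely on $U \coloneqq V \setminus S$, so does its closed subgroup scheme $T_{(n)}$, and $U/T_{(n)}$ exists as a smooth scheme because $T_{(n)}$ is finite. The compatible smooth presentations $U/T_{(n)} \to \mathcal{B}T_{(n)}$ and $U/G_{(n)} \to \mathcal{B}G_{(n)}$ together with the functoriality of the \v{C}ech-nerve construction \ref{comparisonmap} yield a commutative square
\[
\begin{tikzcd}
CH^{*}(BG_{(n)})/p \arrow[r] \arrow[d] & CH^{*}(BT_{(n)})/p \arrow[d] \\
H^{*}_{\textup{\'et}}(\mathcal{B}G_{(n)}, \mathbb{Z}/p(*)) \arrow[r] & H^{*}_{\textup{\'et}}(\mathcal{B}T_{(n)}, \mathbb{Z}/p(*)).
\end{tikzcd}
\]

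The top horizontal arrow is injective: applying Lemma \ref{lemmachowBG} to both $G$ and $T$ (note that $T$ has torsion index $1$, so every prime is non-torsion for it), it is identified with the pullback $CH^{*}(BG)/p \to CH^{*}(BT)/p$, which is injective because $p$ is a non-torsion prime of $G$ (see \ref{ChowTorsion}).

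To control the right vertical arrow, I extend it further to Hodge cohomology via the natural map $H^{*}_{\textup{\'et}}(\mathcal{B}T_{(n)}, \mathbb{Z}/p(*)) \to H^{*}_{\textup{H}}(\mathcal{B}T_{(n)}/k)$ and show that the composition $CH^{*}(BT_{(n)})/p \to H^{*}_{\textup{H}}(\mathcal{B}T_{(n)}/k)$ is injective. Writing $T_{(n)} \cong \mu_{p^n}^{r}$ with $r = \textup{rank}(G)$, the K\"unneth formula for Chow groups of $B\mu_{p^n}^{r}$ (which holds since the motive of $B\mu_{p^n}$ is pure Tate by Proposition \ref{CohBmu}) and Totaro's K\"unneth formula for Hodge cohomology of classifying stacks \cite[Proposition 5.1]{Tot} decompose both sides as $r$-fold tensor products, and the cycle class map is compatible with these external products. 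This reduces the statement to the injectivity of $CH^{*}(B\mu_{p^n})/p \hookrightarrow H^{*}_{\textup{H}}(\mathcal{B}\mu_{p^n}/k)$, which is exactly what the proof of Proposition \ref{comparisonmup^n} establishes.

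Putting everything together, the composition $CH^{*}(BG_{(n)})/p \to CH^{*}(BT_{(n)})/p \to H^{*}_{\textup{H}}(\mathcal{B}T_{(n)}/k)$ is injective; by the diagram above (extended on the right by the \'etale-to-Hodge map) it factors through $H^{*}_{\textup{\'et}}(\mathcal{B}G_{(n)}, \mathbb{Z}/p(*))$, so the cycle class map in question is injective. The step I expect to be the main obstacle is verifying naturality and product-compatibility of the cycle class map at the level of \v{C}ech nerves, but once a common representation $V$ and invariant subset $S$ are chosen simultaneously for $T_{(n)}$ and $G_{(n)}$ (respectively, for each factor $\mu_{p^n}$ in a product decomposition), this reduces to standard functoriality of the sheaves $W_{1}\Omega_{\textup{log}}^{j}$ and Zariski-to-\'etale comparison.
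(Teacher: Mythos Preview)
Your proposal is correct and follows essentially the same route as the paper: reduce to $T_{(n)}$ via the injectivity of $CH^{*}(BG_{(n)})/p \to CH^{*}(BT_{(n)})/p$ (from the proof of Lemma \ref{lemmachowBG}), and then appeal to Proposition \ref{comparisonmup^n}. The paper's proof is a two-line version of yours; you have simply made explicit the naturality square and the K\"unneth step passing from $\mu_{p^n}$ to $\mu_{p^n}^{\textup{rank}(G)}$, which the paper leaves implicit.
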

\begin{proof}
From the proof of Lemma \ref{lemmachowBG}, the pullback map $$CH^{*}(BG_{(n)})/p \to CH^{*}(BT_{(n)})/p$$ is injective. The result then follows from Proposition \ref{comparisonmup^n}.
\end{proof}

\end{document}